\documentclass[letterpaper,11pt]{article}
\usepackage[toc,page]{appendix}
\usepackage[margin=1in]{geometry}
\usepackage[bookmarks, colorlinks=true, plainpages = false, citecolor = blue,linkcolor=red,urlcolor = blue, filecolor = blue,pagebackref]{hyperref}
\usepackage{amsmath,amsfonts,amsthm,amssymb,bm, verbatim,dsfont,mathtools}
\usepackage{color,graphicx,appendix}
\usepackage{subfigure}
\usepackage{etoolbox}
\usepackage{tikz}
\usepackage{xr,xspace}
\usepackage{todonotes}
\usepackage{paralist}
\usepackage{caption,soul}
\usepackage{algorithm}
\usepackage{algorithmic}
\usepackage{enumitem}
\makeatletter


\newtheorem{theorem}{Theorem}
\newtheorem{lemma}{Lemma}
\newtheorem{proposition}{Proposition}

\theoremstyle{definition}
\newtheorem{definition}{Definition}
\newtheorem{hypothesis}{Hypothesis}

\newtheorem{remark}{Remark}

\usepackage{tikz}

\usepackage{xspace,prettyref}
\usepackage{bm}
\newrefformat{eq}{(\ref{#1})}
\newrefformat{chap}{Chapter~\ref{#1}}
\newrefformat{sec}{Section~\ref{#1}}
\newrefformat{alg}{Algorithm~\ref{#1}}
\newrefformat{fig}{Fig.~\ref{#1}}
\newrefformat{tab}{Table~\ref{#1}}
\newrefformat{rmk}{Remark~\ref{#1}}
\newrefformat{clm}{Claim~\ref{#1}}
\newrefformat{def}{Definition~\ref{#1}}
\newrefformat{cor}{Corollary~\ref{#1}}
\newrefformat{lmm}{Lemma~\ref{#1}}
\newrefformat{prop}{Proposition~\ref{#1}}
\newrefformat{app}{Appendix~\ref{#1}}
\newrefformat{hyp}{Hypothesis~\ref{#1}}
\newrefformat{thm}{Theorem~\ref{#1}}
\newrefformat{ass}{Assumption~\ref{#1}}
\newrefformat{conj}{Conjecture~\ref{#1}}

\newcommand{\ie}{i.e.\xspace}
\renewcommand{\P}{\mathcal{P} }
\newcommand{\Q}{\mathcal{Q}}
\newcommand{\Exp}{\mathbb{E}}

\newcommand{\indicator}[1]{\bm{1}_{#1}}
\renewcommand{\hat}{\widehat}
\renewcommand{\tilde}{\widetilde}
\newcommand{\argmax}{\mathrm{argmax}}

\usepackage{color}

\newcommand{\blue}{\color{blue}}
\newcommand{\nb}[1]{{\sf\blue[#1]}}

\renewcommand{\implies}{\Rightarrow}
\newcommand{\1}[1]{{\mathbf{1}_{\left\{{#1}\right\}}}}

\newcommand{\Hyper}{\text{Hypergeometric}}


\newcommand{\TV}{d_{\rm TV}}

\usepackage{xspace,prettyref}
\newcommand{\CML}{\widehat{C}_{\rm ML}}
\newcommand{\diverge}{\to\infty}

\newcommand{\iiddistr}{{\stackrel{\text{\iid}}{\sim}}}
\newcommand{\ones}{\mathbf 1}

\newcommand{\reals}{{\mathbb{R}}}

\newcommand{\naturals}{{\mathbb{N}}}


\newcommand{\eexp}{{\rm e}}

\newcommand{\identity}{\mathbf I}
\newcommand{\allones}{\mathbf J}

\newcommand{\diff}{{\rm d}}

\newcommand{\Expect}{\mathbb{E}}
\newcommand{\expect}[1]{\mathbb{E}\left[ #1 \right]}
\newcommand{\eexpect}[1]{\mathbb{E}[ #1 ]}

\newcommand{\Prob}{\mathbb{P}}
\newcommand{\pprob}[1]{ \mathbb{P}\{ #1 \} }
\newcommand{\prob}[1]{ \mathbb{P}\left\{ #1 \right\} }

\newcommand{\Bern}{{\rm Bern}}
\newcommand{\Binom}{{\rm Binom}}

\newcommand{\eg}{e.g.\xspace}
\newcommand{\iid}{i.i.d.\xspace}
\newrefformat{eq}{(\ref{#1})}
\newrefformat{chap}{Chapter~\ref{#1}}
\newrefformat{sec}{Section~\ref{#1}}
\newrefformat{alg}{Algorithm~\ref{#1}}
\newrefformat{fig}{Fig.~\ref{#1}}
\newrefformat{tab}{Table~\ref{#1}}
\newrefformat{rmk}{Remark~\ref{#1}}
\newrefformat{clm}{Claim~\ref{#1}}
\newrefformat{def}{Definition~\ref{#1}}
\newrefformat{cor}{Corollary~\ref{#1}}
\newrefformat{lmm}{Lemma~\ref{#1}}
\newrefformat{prop}{Proposition~\ref{#1}}
\newrefformat{app}{Appendix~\ref{#1}}
\newrefformat{hyp}{Hypothesis~\ref{#1}}
\newrefformat{thm}{Theorem~\ref{#1}}
\newrefformat{ass}{Assumption~\ref{#1}}

\newcommand{\pth}[1]{\left( #1 \right)}
\newcommand{\qth}[1]{\left[ #1 \right]}


\newcommand{\iprod}[2]{\left \langle #1, #2 \right\rangle}
\newcommand{\Iprod}[2]{\langle #1, #2 \rangle}
\newcommand{\indc}[1]{{\mathbf{1}_{\left\{{#1}\right\}}}}

\newcommand{\tz}{{\widetilde{z}}}
\newcommand{\tA}{{\widetilde{A}}}

\newcommand{\tG}{{\widetilde{G}}}

\newcommand{\tP}{{\widetilde{P}}}

\newcommand{\tV}{{\widetilde{V}}}

\newcommand{\calC}{{\mathcal{C}}}

\newcommand{\calE}{{\mathcal{E}}}

\newcommand{\calG}{{\mathcal{G}}}

\newcommand{\calN}{{\mathcal{N}}}

\newcommand{\calP}{{\mathcal{P}}}

\newcommand{\calS}{{\mathcal{S}}}
\newcommand{\calT}{{\mathcal{T}}}

\newcommand{\Th}{{\rm th}}

\newcommand{\PDS}{{\sf PDS}\xspace}
\newcommand{\PC}{{\sf PC}\xspace}

\newcommand{\ER}{Erd\H{o}s-R\'enyi\xspace}

\renewcommand{\hat}{\widehat}
\renewcommand{\tilde}{\widetilde}

\newcommand{\MMSE}{{\rm MMSE}}

\newcommand{\tsigma}{\tilde{\sigma}}

\newcommand{\score}{\calT}

  \begin{document}

\title{Statistical Problems with Planted Structures: Information-Theoretical and Computational Limits}

\date{\today}
\author{ 
Yihong Wu \and Jiaming Xu\thanks{
Y.~Wu is with Department of Statistics and Data Science, 
Yale University, New Haven, CT 06520, USA, \texttt{yihong.wu@yale.edu}.
J.~Xu is with the Fuqua School of Business, Duke University,
Durham, NC 27708, \texttt{jiaming.xu868@duke.edu}.}
}
  
  \maketitle

\begin{abstract}
Over the past few years, insights from computer science, statistical physics, and information theory have revealed phase transitions in a wide array of high-dimensional statistical problems at two distinct thresholds: One is the information-theoretical (IT) threshold below which the observation is too noisy so that inference of the ground truth structure is impossible regardless of the computational cost; the other is the computational threshold above which inference can be performed efficiently, i.e., in time that is polynomial in the input size. In the intermediate regime, inference is information-theoretically possible, but conjectured to be computationally hard.

This article provides a survey of the common techniques for determining the sharp IT and computational limits, using community detection and submatrix detection as illustrating examples. For IT limits, we discuss tools including the first and second moment method for analyzing the maximum likelihood estimator, information-theoretic methods for proving impossibility results using  mutual information and rate-distortion theory, and methods originated from statistical physics such as interpolation method. To investigate computational limits, we describe a common recipe to construct a randomized polynomial-time reduction scheme that approximately maps instances of the planted clique problem to the problem of interest in total variation distance.
\end{abstract}

\tableofcontents
  
 \section{Introduction}
\label{sec:intro}


 %

The interplay between information theory and statistics is a constant theme in the development of both fields.
Since its inception, information theory has been indispensable for understanding the fundamental limits of statistical inference.
Classical \emph{information bound} provides fundamental lower bounds for the estimation error, including Cram\'er-Rao and Hammersley-Chapman-Robbins lower bounds in terms of Fisher information and  $\chi^2$-divergence \cite{lehmann,BL91}. 
In the classical ``large-sample'' regime in parametric statistics, Fisher information also governs the sharp minimax risk in regular statistical models \cite{VdV00}. The prominent role of information-theoretic quantities such as mutual information, metric entropy and capacity in establishing the minimax rates of estimation has long been recognized since the seminal work of \cite{Lecam86,IKbook,Birge83,YB99}, etc. 

Instead of focusing on the large-sample asymptotics, the attention of contemporary statistics has shifted toward \emph{high dimensions}, where both the problem size and the sample size grow simultaneously and the main objective is to obtain a tight characterization of the optimal statistical risk. Information-theoretic methods have been remarkably successful for high-dimensional problems, such as methods based on metric entropy and Fano's inequality for determining the minimax risk  within universal constant factors (minimax rates) \cite{YB99}. Unfortunately,  the aforementioned methods are often too crude for the task of determining the \emph{sharp constant}, which requires more refined analysis and stronger information-theoretic tools.

An additional challenge in dealing with high dimensionality is to address the computational aspect of statistical inference. An important element absent from the classical statistical paradigm is the computational complexity of inference procedures, which is becoming increasingly relevant for data scientists dealing with large-scale noisy datasets. Indeed, recent results \cite{berthet2013lowerSparsePCA,ma2013submatrix,HajekWuXu14,wang2016statistical,gao2017sparse,Brennan18}
 revealed the surprising phenomenon that certain problems concerning large networks and matrices undergoes an ``easy-hard-impossible'' phase transition and computational constraints can severely penalize the statistical performance.
It is worth pointing out that here the notion of complexity differs from the worst-case computational hardness studied in the computer science literature which focused on the time and space complexity of various worst-case problems. 
In contrast, in a statistical context, the problem is of a stochastic nature and the existing theory on average-case hardness is significantly underdeveloped. 
Here, the hardness of a statistical problem is often established either under certain computation models, such as the sums-of-squares 
relaxation hierarchy, or by means of a reduction argument from another problem, notably the planted clique problem, which is conjectured to be computationally intractable.  



In this article we provide an exposition on some of the methods for determining the information-theoretic (IT) as well as computational limits for high-dimensional statistical problems with a planted structure, with specific focus on characterizing sharp thresholds. 
Here the planted structure refers to the true parameter, referred to as the ground truth, which is often of a combinatorial nature (e.g. partition) and hidden in the presence of random noise. To characterize the IT limit, we will discuss tools including the first and second moment method for analyzing the maximum likelihood estimator, information-theoretic methods for proving impossibility results using  mutual information and rate-distortion theory, and methods originated from statistical physics such as the interpolation method.
Determining the computational limit of statistical problems, especially the ``easy-hard-impossible'' phase transition, has no established recipe and is usually done on case-by-case basis; nevertheless, the common element is to construct a randomized polynomial-time reduction scheme that \emph{approximately} maps instances of a given hard problem to one that is close to the problem of interest in total variation distance.

 \section{Basic Setup}

%
%
%
%

To be concrete, in this article we consider two representative problems, namely, \emph{community detection}  and \emph{submatrix detection}
as running examples. 
Both problems can be cast as the Bernoulli and Gaussian version of the following statistical model with planted community structure.

	
We first consider a random graph model containing a single hidden community whose size can be sublinear in data matrix size $n$.

\begin{definition}[Single Community Model]  \label{def:single}
 Let $C^*$ be drawn uniformly at random from all subsets of $[n]$ of cardinality $K$.
 Given probability measures $P$ and $Q$ on a common measurable space, 
 let $A$ be an $n \times n$ symmetric matrix with zero diagonal
 where for all $1 \le i<j \le n$, $A_{ij}$ are mutually independent, and $A_{ij} \sim P$ if $i,j\in C^*$ and $A_{ij} \sim Q$ otherwise.
\end{definition}

In this paper we assume that we only have access to pairwise information $A_{ij}$ for distinct indices $i$ and $j$ whose distribution is either $P$ or $Q$ depending on the community membership;
no direct observation about the individual indices is available (hence the zero diagonal of $A$).
Two choices of $P$ and $Q$ arising in many applications are the following:
\begin{itemize}
\item {Bernoulli case}: 
 $P=\Bern(p)$ and $Q=\Bern(q)$ with $p\neq q $. When $p>q$, this coincides with the {\em planted dense subgraph model}
studied in \cite{McSherry01,arias2013community,ChenXu14,HajekWuXu14,Montanari:15OneComm},
which is also a special case of the general stochastic block model (SBM)~\cite{Holland83} with a single community.\index{Stochastic block model (SBM)}\index{Stochastic block model (SBM)! Single community}
In this case, the data matrix $A$ corresponds to the adjacency matrix of a graph, where two vertices are connected with probability $p$ if both belong to the community $C^\ast$, and with probability $q$ otherwise.
Since $p > q$, the subgraph induced by $C^\ast$ is likely to be denser than the rest
of the graph. 

\item {Gaussian case}: $P=\calN(\mu,1)$ and $Q=\calN(0,1)$ with $\mu \neq 0$. 
This corresponds to a symmetric version of the {\em submatrix detection} problem studied in \cite{shabalin2009submatrix,kolar2011submatrix,butucea2013,Butucea2013sharp,ma2013submatrix,ChenXu14,CLR15}.\index{Submatrix detection problem}
When $\mu>0$, the entries of $A$ with row and column indices in $C^\ast$ have positive mean $\mu$ except those on the diagonal, while the rest of the entries have zero mean. 
\end{itemize}

We will also consider a binary symmetric community model with two communities of equal sizes. 
The Bernoulli case is known as binary symmetric stochastic block model (SBM).\index{Binary symmetric community model}\index{Stochastic block model (SBM)! Two communities}

 \begin{definition}[Binary symmetric community model]
\label{def:binary}
 Let $(C_1^*,C_2^*)$ be two communities of equal size that are drawn uniformly at random from all equal-sized partitions of $[n]$.
 Let $A$ be an $n \times n$ symmetric matrix with empty diagonal
 where for all $1 \le i<j \le n$, $A_{ij}$ are mutually independent, and $A_{ij} \sim P$ if $i,j$ are from the same
 community and $A_{ij} \sim Q$ otherwise.
 \end{definition}

Given the data matrix $A$, the problem of interest is to accurately recover the underlying single community $C^*$ or community
partition $(C_1^*,C_2^*)$ up to a permutation of cluster indices. The distributions $P$ and $Q$ as well as the community size $K$
depend on the matrix size $n$ in general. For simplicity
we assume that these model parameters are known to
the estimator. Common objectives of recovery include the following:
\begin{itemize}
\item \textbf{Detection}: detect the presence of planted communities versus the absence. This is a hypothesis problem: in the null case the observation consists of purely noise with independently and identically distributed (iid) entries, 
while in the alternative case the distribution of the entries are dependent on the hidden communities per \prettyref{def:single} or \ref{def:binary}.
	\item \textbf{Correlated recovery}: recover the hidden communities better than random guessing. 
	For example, for the binary symmetric SBM, the goal is to achieve a misclassification rate strictly less than $1/2$.
	\item \textbf{Almost exact recovery}: The expected number of misclassified vertices is sublinear in the hidden community sizes. 
	\item \textbf{Exact recovery}: All vertices are classified correctly with probability converging to $1$ as the dimension $n \to \infty.$ 
\end{itemize}


 \section{Information-theoretic limits}

\subsection{Detection and correlated recovery}
	\label{sec:detection}
In this subsection, we study  detection and correlated recovery under the binary symmetric community model.
The community structure under the binary symmetric community model can be represented by a vector $\sigma \in \{\pm 1\}^n$
such that $\sigma_i=1$ if vertex $i$ is in the first community and $\sigma_i=-1$ otherwise. 
Let $\sigma^*$ denote the true community partition and 
$\hat{\sigma} \in \{\pm 1\}^n$ denote an estimator of $\sigma$. 
For detection, we assume under the null model, $A_{ii}=0$ for all $1 \le i \le n$ and $A_{ij}=A_{ij}$ are iid as $\frac{1}{2}(P+Q)$ for $1 \le i < j \le n$, so that $\expect{A}$ is matched between the planted and null model.

\begin{definition}[Detection]
Let $\P$ be the distribution of $A$ in the planted model, 
and denote by $\Q$ the distribution of $A$ in the null model.  A test statistic $\calT(A)$ with a threshold $\tau$ achieves detection if\footnote{This criterion is also known as strong detection, 
in contrast to weak detection which
only requires 
$\P( \calT(A) < \tau ) + \Q( \calT(A) \ge \tau )$ to be bounded away from $1$
as $n \to \infty$. In this paper, we focus exclusively on strong detection. 
See~\cite{PerryWeinBandeira16,alaoui2017finite} for detailed discussions on weak detection.
}
\[
\limsup_{n \to \infty} \left[ \P( \calT(A) < \tau) + \Q(\calT(A) \ge \tau) \right] = 0,
\]
 so that the criterion $\calT(A) \ge \tau$ determines with high probability whether $A$ is drawn from $\P$ or $\Q$.\index{Detection}
\end{definition}

\begin{definition}[Correlated Recovery]
Estimator $\hat{\sigma}$ achieves correlated recovery of $\sigma^*$ if there exists a fixed
constant $\epsilon>0$ such that  $\eexpect{\left| \iprod{\sigma}{\sigma^*} \right|} \ge \epsilon n$ for all $n$.\index{Correlated Recovery}
\end{definition}

The detection problem can be understood as a binary hypothesis testing problem.  Given a test statistic $\calT(A)$, we consider its distribution under the planted and null models.  
If these two distributions are asymptotically disjoint, i.e., their total variation distance tends to $1$ in the limit of large datasets, then it is information-theoretically possible to distinguish the two models with high probability by measuring $\calT(A).$ A classic choice of statistic for binary hypothesis testing
is the likelihood ratio, 
\[
	\frac{\P(A)}{\Q(A)} 
	= \frac{\sum_\sigma \P(A, \sigma)}{\Q(A)} 
	= \frac{\sum_\sigma \P(A | \sigma) \,\P(\sigma)}{\Q(A)} \, . 
\]
This object will figure heavily in both our upper and lower bounds of the detection threshold. 

Before presenting our proof techniques, we first give the sharp threshold for 
detection and correlated recovery under the binary symmetric community model.

\begin{theorem}
\label{thm:corr}
Consider the binary symmetric community model. 
\begin{itemize}
\item If  $P=\Bern(a/n)$ and $Q=\Bern(b/n)$ for 
fixed constants $a, b$, then both detection and correlated recovery are information-theoretically 
possible  when $(a-b)^2>2(a+b)$ and impossible when $(a-b)^2<2(a+b)$.
\item If $P=\calN(\mu/\sqrt{n},1)$ and $Q=\calN(- \mu/\sqrt{n},1)$, then both detection and correlated recovery are information-theoretically possible when $\mu>1$
and impossible when $\mu<1$. 
\end{itemize}
\end{theorem}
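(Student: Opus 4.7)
My plan is to handle the two sides of the threshold separately: impossibility via a second-moment analysis of the likelihood ratio together with a tree-reconstruction argument for recovery, and achievability via a concrete spectral or walk-counting statistic.

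\emph{Impossibility ($\lambda<1$).} Let $\P$ denote the planted distribution, $\Q$ the null, and $L=d\P/d\Q$. Writing $\P$ as a uniform mixture over balanced $\sigma\in\{\pm1\}^n$ of the conditional $\P_\sigma$, and using that $\Q$ is an i.i.d.\ product with marginal $\bar\rho=(P+Q)/2$, the second moment factorizes over edges:
\[
\Exp_\Q[L^2] \;=\; \Exp_{\sigma,\sigma'}\prod_{i<j} \Exp_{A\sim\bar\rho}\!\left[\frac{\P_\sigma(A)\,\P_{\sigma'}(A)}{\bar\rho(A)^2}\right],
\]
where $\sigma,\sigma'$ are independent uniform balanced vectors. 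A short calculation shows that each edge factor equals $1+(\sigma_i\sigma_j)(\sigma'_i\sigma'_j)\cdot \lambda/n+o(1/n)$ with $\lambda=(a-b)^2/(2(a+b))$ in the Bernoulli case (by Taylor-expanding in $1/n$) and $\lambda=\mu^2$ in the Gaussian case. Setting $\tau_i=\sigma_i\sigma'_i$ and taking logs reduces the product to $\exp\!\bigl(\tfrac{\lambda}{2n}(\sum_i\tau_i)^2\bigr)$ up to lower-order terms. Since $\sum_i\tau_i/\sqrt{n}$ is asymptotically standard normal under the balanced constraint, a Gaussian integral shows $\Exp_\Q[L^2]$ remains bounded precisely when $\lambda<1$, yielding mutual contiguity of $\P$ and $\Q$ and thus ruling out strong detection. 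Ruling out correlated recovery in the same regime requires a finer argument: one exploits the locally tree-like geometry of the SBM to reduce to a broadcast process on a Galton--Watson tree, which is non-reconstructible for $\lambda<1$ by Kesten--Stigum/Evans--Kenyon--Peres--Schulman, and transfers non-reconstructibility back to the graph via small-subgraph conditioning following Mossel--Neeman--Sly.

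\emph{Achievability ($\lambda>1$).} In the Gaussian case, $A$ is a rank-one signal $\tfrac{\mu}{\sqrt{n}}\sigma^*(\sigma^*)^\top$ plus a $\mathrm{GOE}$-type noise matrix, so the BBP phase transition gives a detached top eigenvalue and an eigenvector correlated with $\sigma^*$ exactly when $\mu>1$; thresholding either quantity yields detection and correlated recovery. In the sparse Bernoulli case the raw spectrum of $A$ is polluted by high-degree vertices, so, following Massouli\'e and Mossel--Neeman--Sly, one instead uses the non-backtracking walk matrix (or, equivalently, counts of self-avoiding walks of length $\sim\log n$); its spectrum has a gap above the bulk whenever $(a-b)^2>2(a+b)$, and the corresponding eigenvector correlates with $\sigma^*$.

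\emph{Main obstacle.} The second-moment computation for detection impossibility is routine once the edge factorization is in place, and the Gaussian achievability follows immediately from classical random matrix theory. The genuinely hard steps are (i) upgrading detection impossibility to recovery impossibility below the threshold, which requires the delicate Evans--Kenyon--Peres--Schulman tree-reconstruction bound together with a small-subgraph conditioning transfer argument, and (ii) the sparse Bernoulli achievability, whose analysis of non-backtracking spectra -- local-tree (Poisson Galton--Watson) approximation and careful control of ``cycle-induced'' spurious eigenvalues -- is by far the most technical ingredient.
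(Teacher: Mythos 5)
Your detection-impossibility step (second moment of the likelihood ratio, edge factorization, reduction to $\Exp\exp\bigl(\tfrac{\lambda}{2n}(\sum_i\tau_i)^2\bigr)$ with the hypergeometric overlap) is exactly the paper's argument, and your achievability step matches the paper as well: the paper also defers the sharp upper bounds to the BBP transition in the Gaussian case and to short-cycle counts / non-backtracking (self-avoiding walk) spectral methods for the sparse SBM, after noting that the naive first-moment/MLE analysis only reaches $\mu>2\sqrt{\log 2}$. For correlated-recovery impossibility in the \emph{Bernoulli} case your route is valid but genuinely different: you invoke the broadcast process on a Galton--Watson tree, Kesten--Stigum/EKPS non-reconstruction, and the Mossel--Neeman--Sly transfer via small-subgraph conditioning, whereas the paper stays inside the $\chi^2$-calculus: it proves that correlated recovery is possible iff the pairwise mutual information $I(\sigma_1,\sigma_2;A)=\Omega(1)$, and then kills this mutual information below the threshold by a \emph{conditional} second-moment bound $\int(\P_+-\P_-)^2/\Q=o(1)$, where $\P_\pm$ are the laws of $A$ given $\sigma_1=\sigma_2$ and $\sigma_1\neq\sigma_2$. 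The paper's route is more self-contained (no tree machinery) and, importantly, is not tied to sparsity. One smaller point: your ``Gaussian integral'' step needs convergence of the moment generating function of the overlap, not just a CLT; the paper cites a result of Kozakiewicz for exactly this.

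The genuine gap is the Gaussian correlated-recovery lower bound. Your recovery-impossibility argument is stated only for the SBM and cannot be transported to $P=\calN(\mu/\sqrt n,1)$, $Q=\calN(-\mu/\sqrt n,1)$: there every pair $(i,j)$ carries an observation, the model is a dense weighted complete graph with no locally tree-like neighborhoods, so there is no Galton--Watson/broadcast reduction and no analogue of small-subgraph conditioning. As written, the claim ``correlated recovery is impossible for $\mu<1$'' in the Gaussian bullet is unproven, and (as the paper stresses) it does not follow from contiguity/non-detectability alone. The paper closes this with a separate argument: a bounded second moment gives $D(\P\|\Q)=O(1)=o(n)$; then, introducing an SNR path $A(\beta)=\sqrt{\beta}M+W$ and using the I-MMSE formula plus Fatou's lemma, one shows $\MMSE(\beta)\to\mu^2$ for all $\beta\in[0,1]$, hence $\Exp\|\Exp[M|A]\|_{\rm F}^2=o(n)$ and $\Exp[\iprod{\sigma}{\hat\sigma}^2]=o(n^2)$ for every estimator. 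Alternatively, the conditional second-moment condition $\int(\P_+-\P_-)^2/\Q=o(1)$ can be verified directly with the Gaussian null. You need one of these (or an equivalent) to complete the Gaussian half of the theorem.
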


We will explain how to prove the converse part of \prettyref{thm:corr}  using  second moment analysis of 
the likelihood ratio $\P(A)/\Q(A)$ and mutual information arguments. 
For the positive part of \prettyref{thm:corr}, we will present a simple first moment 
method to derive upper bounds that often coincide with the sharp thresholds up to a multiplicative constant. 

To achieve the sharp detection upper bound for the SBM, one can use the count of 
short cycles as test statistics as in \cite{Mossel13}. To achieve the sharp detection threshold 
in the Gaussian model and correlated recovery threshold in both models, one can resort to spectral methods.
For the Gaussian case, this directly follows from a celebrated phase transition result on the rank-one perturbation of Wigner matrices~\cite{baik2005phase,peche2006largest,benaych2011eigenvalues}. For the SBM, naive spectral methods fail due to the existence of high-degree vertices~\cite{KrzakalaMMSLC13spectral}. 
More sophisticated spectral methods based on self-avoiding walks or
non-backtracking walks have been shown to achieve the sharp correlated recovery threshold efficiently \cite{Massoulie13,Mossel13,BordenaveLelargeMassoulie:2015dq}.

\subsubsection{First moment method for detection and correlated recovery upper bound}\index{First moment method}
Our upper bounds do not use the likelihood ratio directly, since it is hard to furnish lower bounds on the typical value of $\P(A)/\Q(A)$ when $A$ is drawn from $\P$.
  Instead, we use the generalized likelihood ratio
\[
	\max_\sigma \frac{\P(A | \sigma)}{\Q(A)} \, 
\]
as the test statistic.
In the planted model where the underlying true community is $\sigma^*$, this quantity is trivially bounded below by $\P(A | \sigma^* )/\Q(A)$.  Then using a simple first moment argument (union bound) one can show that, in the null model $\Q$, with high probability, this lower bound is not achieved by any $\sigma$ and hence the generalized likelihood ratio test succeeds.  
An easy extension of this argument shows that, in the planted model, the maximum likelihood estimator (MLE)\index{Maximum Likelihood Estimator (MLE)}
$$
\widehat{\sigma}_{\rm ML} = \argmax_\sigma \P(A | \sigma)
$$ 
has nonzero correlation with $\sigma^*$, achieving the correlated recovery.  

Note that the first moment analysis of MLE often falls short of proving the sharp detection and correlated recovery upper bound. For instance, as we will explain next, the first moment calculation in~\cite[Theorem 2]{Banks16}
only shows the MLE achieves detection and correlated recovery when $\mu>2\sqrt{\log 2}$ in the Gaussian model,\footnote{Throughout this article, logarithmic is with respect to the natural base.} which is suboptimal in view of \prettyref{thm:corr}.
One reason is that the naive union bound in the first moment analysis may not
be tight; it does not take into the account the correlation between $P(A|\sigma)$ and $P(A|\sigma')$ for two different
$\sigma, \sigma'$ under the null model.

Next we explain how to carry out the first moment analysis in 
the Gaussian case with $P=\calN(\mu/\sqrt{n},1)$
and $Q=\calN( -\mu/\sqrt{n},1)$.
Specifically, assume $A= (\mu/\sqrt n)\left( \sigma^* (\sigma^*)^\top - \identity \right) +W$,
where  $W$ is a symmetric Gaussian random variable with zero diagonal and
$W_{ij} \iiddistr \calN(0,1)$ for $i<j$. It follows that 
$
	\log \frac{\P(A | \sigma)}{\Q(A)} = \frac{\mu}{\sqrt{n}} \sum_{i<j} A_{ij} \sigma_i \sigma_j + \frac{\mu^2 (n-1)}{4}.
$
Therefore, the generalized likelihood test reduces to test statistic 
$
	\max_{\sigma} \score(\sigma) \triangleq \sum_{i < j} A_{i,j} \sigma_i \sigma_j. 
$
Under the null model $\Q$, $
	\score(\sigma) \sim \calN\left(0, \frac{n (n-1) }{2} \right).$
Under the planted model $\P$, $
	\score(\sigma)  = \frac{\mu}{\sqrt{n} } \sum_{i<j} \sigma_i^* \sigma_j^* \sigma_i \sigma_j 
	 + \sum_{i < j} W_{i,j} \sigma_i \sigma_j.$
Hence the distribution of $\score(\sigma)$ depends on the overlap  $|\Iprod{\sigma}{\sigma^*}|$ 
between $\sigma$ and the planted partition $\sigma^*$. Suppose $|\Iprod{\sigma}{\sigma^*}|= n\omega$. Then 
\begin{align*}
	\score(\sigma) \sim \calN\left(\frac{\mu n ( n\omega^2-1) }{2 \sqrt n}  , \frac{n (n-1) }{2} \right).
\end{align*}
To prove that detection is possible, notice that in the planted model, 
$\max_\sigma \score(\sigma) \ge \score(\sigma^*)$.
Setting $\omega = 1$, Gaussian tail bounds yield that
\begin{align*}
	\P\left[\score(\sigma^*) \le \frac{\mu n(n-1) }{2 \sqrt n} -  n \sqrt{\log n} \right] \le n^{-1}.
\end{align*}
Under the null model,  taking the union bound over at most $2^n$ ways to choose $\sigma$, 
we can bound the probability that \emph{any} partition is as good, according to $\score$, as the planted one, by
\begin{align*}
	 \Q\left[\max_{\sigma}\score(\sigma) > \frac{\mu n(n-1) }{2 \sqrt n} -  n \sqrt{\log n} \right]   \le 
	 2^n \exp \left( 
	 - n \left( \frac{\mu}{2} \sqrt{\frac{n-1}{n}} - 
	\sqrt{\frac{\log n}{n-1} } \right)^2  \right) .
\end{align*}
Thus the probability of this event is $e^{-\Omega(n)}$ whenever $ \mu > 2 \sqrt{\log 2}$, 
meaning that above this threshold we can distinguish the null and planted models with 
the generalized likelihood test.

To prove that correlated recovery is possible, 
since $\mu > 2 \sqrt{\log 2}$, there exists a fixed $\epsilon>0$ such that $\mu (1-\epsilon^2) > 2  
\sqrt{\log 2}$. 
Taking the union bound over every $\sigma$ with $|\Iprod{\sigma}{\sigma^*}| \le n \epsilon$ gives
\begin{align*}
	 &~\P\left[\max_{ |\Iprod{\sigma}{\sigma^*}| \le n \epsilon }\score(\sigma) 
	\ge \frac{\mu n (n-1)  }{2\sqrt n} - n \sqrt{\log n} \right] \\
	\le &~2^n \exp \left( -  n\left( \frac{\mu (1-\epsilon^2) }{2} \sqrt{\frac{n}{n-1}} - 
	\sqrt{\frac{\log n}{n-1} } \right)^2  \right).
\end{align*}
 Hence, with probability at least $1-e^{-\Omega(n)}$, 
$$
\max_{ |\Iprod{\sigma}{\sigma^*}| \le n \epsilon } \score(\sigma)  <  \frac{n (n-1) \mu }{2\sqrt n} - n \sqrt{\log n},
$$
and consequently $ | \Iprod{ \hat{\sigma}_{\rm ML} }{ \sigma^*} |  \ge n \epsilon $ with high probability.
Thus, $ \hat{\sigma}_{\rm ML}$ achieves correlated recovery.

\subsubsection{Second moment method for detection lower bound}\index{Second moment method}
Intuitively, if the planted model $\P$ and the null model $\Q$ are close to being mutually singular, then the likelihood ratio $\P/\Q$ is almost always either very large or close to zero.  In particular, its variance under $\Q$, that is, the $\chi^2$-divergence\index{$\chi^2$-divergence}
\[
\chi^2(\P\|\Q) \triangleq  \Exp_{A \sim \Q} 
 \left[ \left( \frac{\P(A) } {\Q(A)} -1 \right)^{2} \right] 
\]
 must diverge. This suggests that we can derive lower bounds on the detection threshold 
by bounding the second moment of $\P(A)/\Q(A)$ under $\Q$, or equivalently its expectation under $\P$.  
Suppose the second moment is bounded by some constant $C$, \ie,
\begin{align}
\label{eq:smm-bound}
\chi^2(\P\|\Q)+1=
\Exp_{A \sim \Q} 
 \left[ \left( \frac{\P(A) } {\Q(A)} \right)^{2} \right] 
= \Exp_{A \sim \P}  \left[ \frac{\P(A)}{\Q(A)} \right] 
\le C \, .
\end{align}
A bounded second moment readily implies a bounded Kullback-Leibler divergence between $\P$ and $\Q$, since Jensen's inequality gives
 \begin{align}
 D (\P \| \Q)  = \Exp_{A \sim \P} \log \frac{\P(A)}{\Q(A)} \le \log \Exp_{A \sim \P} \frac{\P(A)}{\Q(A)} 
 \le \log C = O(1) \, . \label{eq:secondmoment_KL}
 \end{align}
 Moreover, it also implies non-detectability. To see this, 
 let $E=E_n$ be a sequence of events such that $\Q(E) \to 0$ as $n\to \infty$, 
 and let $\indicator{E}$ denote the indicator random variable for $E$.  Then the Cauchy-Schwarz inequality gives
 \begin{align} 
	\P(E) 
	= \Exp_{A \sim \Q} \frac{\P(A)}{\Q(A)} \,\indicator{E}  
	 \le \sqrt{
	\Exp_{A \sim \Q}  \left( \frac{\P(A)}{\Q(A)} \right)^{\!2}  
	\times \Exp_{A \sim \Q} \,\indicator{E}^2 }  
	 \le \sqrt{C \Q(E)} \to 0 \, . \label{eq: contiguity}
\end{align}
In other words, the sequence of distributions $\P$ is \emph{contiguous} to $\Q$ \cite{Lecam86}.\index{Contiguity}
Therefore, 
no algorithm can return ``yes'' with high probability (or even positive probability) in the planted model, and ``no'' with high probability in the null model. Hence, detection is impossible.

Next we explain how to compute the $\chi^2$-divergence for the binary symmetric SBM. 
One useful observation due to \cite{IS03} (see also \cite[Lemma 21.1]{it-stats}) is that, using Fubini's theorem, the $\chi^2$-divergence between a mixture distribution and a simple distribution can be written as
\[
\chi^2(\P\|\Q) + 1 = \Expect_{\sigma,\tsigma}\qth{ \Expect_{A \sim \Q}\qth{\frac{\P(A|\sigma)\P(A|\tilde\sigma)}{\Q(A)}}},
\]
where $\tilde\sigma$ is an independent copy of $\sigma$. 
Note that under the planted model $\calP$, the distribution of the $ij$th entry is given by 
$P \indc{\sigma_i=\sigma_j} + Q \indc{\sigma_i \neq \sigma_j} = \frac{P+Q}{2} + \frac{P-Q}{2} \sigma_i \sigma_j$.
Thus\footnote{In fact, the quantity $\rho=\int\frac{(P-Q)^2}{2(P+Q)}$ is an $f$-divergence known as the Vincze-Le Cam distance \cite{Lecam86,Vajda09}.}
\begin{align}
\chi^2(\P\|\Q) + 1
= & ~ \Expect\qth{\prod_{i < j} \int \frac{(\frac{P+Q}{2} + \frac{P-Q}{2} \sigma_i \sigma_j)(\frac{P+Q}{2} + \frac{P-Q}{2} \tsigma_i \tsigma_j)}{\frac{P+Q}{2}}} \nonumber \\
= & ~ \Expect\Bigg[\prod_{i < j} \Bigg(1 +  \sigma_i \sigma_j \tsigma_i \tsigma_j \underbrace{\int \frac{(P-Q)^2}{2(P+Q)}}_{\triangleq \rho}\Bigg)\Bigg] \label{eq:SBM_second_moment_eq}\\
\leq & ~ \expect{\exp\pth{\rho \sum_{i < j} \sigma_i \tilde\sigma_i \sigma_j \tilde \sigma_j }}.
\label{eq:SBM_second_moment}
\end{align}
For the Bernoulli setting where $P=\Bern(a/n)$ and $Q=\Bern(b/n)$ for 
fixed constants $a, b$,
we have $\rho \triangleq \frac{\tau}{n} + O(\frac{1}{n^2})$, where $\tau\triangleq\frac{(a-b)^2}{2(a+b)}$. 
Thus,
\[
\chi^2(\P\|\Q) + 1 \leq \expect{\exp\pth{\frac{\tau }{2n}\iprod{\sigma}{\tilde \sigma}^2  + O(1)}}.
\]
Write $\sigma=2\xi-1$, where $\xi \in \{0, 1\}^n$ is the indicator vector for the first community which is drawn uniformly at random from all binary vectors with Hamming weight $n/2$, and $\tilde \xi$ is its independent copy. Then $\Iprod{\sigma}{\tilde \sigma} = 4 \Iprod{\xi}{\tilde \xi}-n$, where 
$H \triangleq \Iprod{\xi}{\tilde \xi} \sim \Hyper(n,\frac{n}{2},\frac{n}{2})$. 
Thus
\[
\chi^2(\P\|\Q) + 1\leq \expect{\exp\pth{ \frac{\tau}{2} \pth{ \frac{4 H - n}{\sqrt{n}} }^2  + O(1)}}.
\]
Since $\frac{1}{\sqrt{n/16}}(H-n/4) \to \calN(0,1)$ as $n\diverge$ by the central limit theorem for hypergeometric distributions (see, e.g., \cite[p.~194]{FellerI}), using \cite[Theorem 1]{mgf.conv} for the convergence of moment generating function, we conclude that $\chi^2(\P\|\Q) $ is bounded if $\tau<1$. 

%
%
%

\subsubsection{Mutual information-based lower bound for correlated recovery}
It is tempting to conclude that whenever detection is impossible---that is, whenever we cannot correctly tell with high probability 
whether the observation was generated from the null or planted model---we cannot infer the planted community structure $\sigma^*$ better than chance either; this deduction, however, is not true in general 
(see \cite[Section III.D]{Banks16} for a simple counterexample). Instead, we resort
to mutual information in proving lower bounds for correlated recovery. 
In fact, there are two types of mutual information that are relevant in the context of correlated recovery:
	\paragraph{Pairwise mutual information $I(\sigma_1,\sigma_2; A)$.}
	For two communities, it is easy to show that correlated recovery is impossible if and only if 
\begin{equation}
I(\sigma_1,\sigma_2;A) = o(1)
\label{eq:MIcorr2}
\end{equation}
as $n\diverge$.
This in fact also holds for $k$ communities for any constant $k$. See \prettyref{app:MIcorr} for a justification in a general setting.
Thus \prettyref{eq:MIcorr2} provides an information-theoretic characterization for correlated recovery.

The intuition is that since $I(\sigma_1,\sigma_2;A) = I(\indc{\sigma_1 = \sigma_2};A)$, \prettyref{eq:MIcorr2} means that 
the observation $A$ does not provide enough information to distinguish whether any two vertices are in the same community. Alternatively, 
since $I(\sigma_1;A)=0$ by symmetry and $I(\sigma_1;\sigma_2)=o(1)$,\footnote{Indeed, since $\prob{\sigma_2=-|\sigma_1=+} = \frac{n}{2n-2}$, $I(\sigma_1;\sigma_2) = \log 2 - h(\frac{n}{2n-2}) = \Theta(n^{-2})$, where $h$ is the binary entropy function in \prettyref{eq:binaryentropy}.} it follows from the chain rule that 
$I(\sigma_1,\sigma_2;A)=I(\sigma_1;\sigma_2|A)+o(1)$. Thus \prettyref{eq:MIcorr2} is equivalently to stating that $\sigma_1$ and $\sigma_2$ are asymptotically independent given the observation $A$; 
this is shown in \cite[Theorem 2.1]{Mossel12} for SBM below the recovery threshold $\tau=\frac{(a-b)^2}{2(a+b)}<1$.

Based on strong data processing inequalities for mutual information, recently \cite{PW18} proposed an information-percolation method to bound the mutual information in \prettyref{eq:MIcorr2} in terms of bond percolation probabilities,
 which yields bounds or sharp recovery threshold for correlated recovery; a similar program is carried out independently in \cite{AB18} for a variant of mutual information defined via the $\chi^2$-divergence. For two communities, this method yields the sharp threshold 
in the Gaussian model but not the SBM.

Next, we describe another method of proving \prettyref{eq:MIcorr2} via second moment analysis that reaches the sharp threshold. 
Let $\P_+$ and $\P_-$ denote the conditional distribution of $A$ conditioned on $\sigma_1=\sigma_2$ and $\sigma_1\neq\sigma_2$, respectively. 
The following result can be distilled from \cite{banks-etal-colt} (see \prettyref{app:MITV} for a proof):
for any probability distribution $\Q$, if
\begin{align}
\int \frac{  \left( \P_{+}  - \P_{-} \right)^2 }{ \Q} =o(1), \label{eq:second_moment_conditional}
\end{align}
then \prettyref{eq:MIcorr2} holds and hence correlated recovery is impossible. 
The LHS of \prettyref{eq:second_moment_conditional} can be computed similarly to the usual second moment \prettyref{eq:SBM_second_moment_eq} when $\Q$ is chosen to be the distribution of $A$ under
 the null model.  
In \prettyref{app:MITV} we verify that \prettyref{eq:second_moment_conditional} is satisfied below the correlated recovery threshold $\tau=\frac{(a-b)^2}{2(a+b)}<1$ for the binary symmetric SBM.



\paragraph{Blockwise mutual information $I(\sigma; A)$.}
Although this quantity is not directly related to correlated recovery \emph{per se}, its derivative with respect to some appropriate SNR parameter can be related to or coincides with the reconstruction error thanks to the I-MMSE formula~\cite{GuoShamaiVerdu05} or variants. 
Using this method, we can prove that Kullback-Leibler (KL) divergence $D (\P \| \Q)=o(n)$ 
implies the impossibility of correlated recovery in the Gaussian case.  
As shown in \prettyref{eq:secondmoment_KL}, a bounded second moment readily implies a bounded Kullback-Leibler divergence.
Hence, as a corollary, we prove that a bounded second moment 
also implies the impossibility of correlated recovery in the Gaussian case.
Below, we sketch the proof of the impossibility of correlated recovery in the Gaussian case, by assuming $D (\P \| \Q)=o(n)$. 
The proof makes use of mutual information, the I-MMSE formula, and a type of interpolation argument~\cite{MontanariPCA14,deshpande2015asymptotic,KrzakalaXuZdeborova16}.



Assume that $A(\beta) = \sqrt{\beta} M+W$ 
in the planted model and $A=W$ in the null model, 
where $\beta \in [0,1]$ is a signal-to-noise ratio parameter, 
$
M=(\mu/\sqrt{n}) (\sigma \sigma^\top -\identity),
$ 
$W$ is a symmetric Gaussian random matrix with zero diagonal and
$W_{ij} \iiddistr \calN(0, 1)$ for all $i<j$. 
Note that $\beta=1$ corresponds to 
the binary symmetric community model in \prettyref{def:binary} with $P=\calN(\mu/\sqrt{n},1)$
and $Q=\calN( -\mu/\sqrt{n},1)$. Below we abbreviate $A(\beta)$ as $A$ whenever the
context is clear. 
First, recall that the minimum mean-squared error estimator is given by
the posterior mean of $M$:
$$
\hat{M}_{\rm MMSE}\left(A \right)  = \expect{M | A  }
$$
and the resulting (rescaled) minimum mean-squared error is
\begin{equation}
\label{eq:mmse-def}
	\MMSE(\beta) = \frac{1}{n} \,\Exp \| M - \expect{M| A } \|_{\rm F}^2 \, .
\end{equation}

We will start by proving that if $D (\P \| \Q) =o(n)$ , then
for all $\beta \in [0, 1]$, the MMSE tends to that of the trivial estimator $\hat{M} = 0$,  \ie,
\begin{align}
	\lim_{n \to \infty} \MMSE (\beta) =   \lim_{n\to \infty}  \frac{ 1}{n} \,\Exp \|M\|_{\rm F}^2  = \mu^2 . \label{eq:mmse_formula}
\end{align}
Note that $\lim_{n\to \infty} \MMSE(\beta)$ exists by~\cite[Proposition III.2]{MontanariPCA14}. 
Let us compute the mutual information between $M$ and $A$: 
\begin{align}
	I(\beta) \triangleq I(M;A)
	& = \Exp_{M, A} \, \log \frac{\P(A|M)}{\P(A)} \label{eq:Ibeta}\\
	& =  \Exp_{A} \, \log \frac{\Q(A)}{\P(A)} +  \Exp_{M, A} \, \log  \frac{ \P(A|M)}{ \Q(A) } 	\nonumber \\
	& = - D (\P \| \Q) +  \frac{1}{2} \Exp_{M, A} \left[ \sqrt{\beta } \iprod{M}{A} - \frac{ \beta \|M\|_{\rm F}^2}{2} \right] 
	\nonumber \\
	& = - D (\P \| \Q) + \frac{\beta }{4}  \Exp \|M \|_{\rm F}^2 \, .
	\label{eq:MI_formula0}
\end{align}
By assumption,  we have $D(\P \| \Q) = o(n)$ holds for $\beta=1$; 
by the data processing inequality for KL divergence~\cite{Csiszar67}, this holds for all $\beta < 1$ as well.  
Thus~\prettyref{eq:MI_formula0} becomes
\begin{align}
	\lim_{n \to \infty} \frac{1}{n} I(\beta ) = \frac{\beta }{4} \lim_{n\to \infty}  \frac{ 1}{n} \Exp\, \|M\|_{\rm F}^2 = \frac{\beta \mu^2}{4} \, .
	\label{eq:MI_formula}
\end{align}

Next we compute the MMSE.  Recall the I-MMSE formula~\cite{GuoShamaiVerdu05} for Gaussian channels:
\begin{equation}
\frac{\diff I(\beta)}{\diff \beta} = \frac{1}{2} \sum_{i<j} \left( M_{ij} - \expect{M_{ij} | A } \right)^2  
= \frac{n}{4} \MMSE(\beta) \,.
\label{eq:i-mmse}
\end{equation}
Note that the MMSE is by definition bounded above by the squared error of the trivial estimator $\hat{M} = 0$, so that for all $\beta$ we have
\begin{equation}
 \MMSE(\beta) \le \frac{1}{n} \,\Exp \, \| M \|_{\rm F}^2 \le \mu^2 \, .
 \label{eq:trivial}
\end{equation}
Combining these we have
\begin{align*}
 \frac{\mu^2}{4}   \overset{(a)}{=} \lim_{n \to \infty} \frac{I(1)}{n} 
 & \overset{(b)}{=} \frac{1}{4}  \lim_{n \to \infty} \int_{0}^{1}  \MMSE(\beta) \,\diff \beta  \\
  & \overset{(c)}{ \le} \frac{1}{4} \int_{0}^{1}  \lim_{n\to \infty}   \MMSE(\beta) \,\diff \beta  \\
  &  \overset{(d)}{\le}  \frac{1}{4} \int_{0}^{1}  \mu^2  \,\diff \beta
   = \frac{\mu^2}{4} \, , 
\end{align*}
where $(a)$ and $(b)$ hold due to~\prettyref{eq:MI_formula} and~\prettyref{eq:i-mmse}, $(c)$ follows from Fatou's lemma, and $(d)$ follows from~\prettyref{eq:trivial}, \ie, $\MMSE(\beta) \leq \mu^2$ pointwise. Since we began and ended with the same expression, these inequalities must all be equalities.  In particular, since $(d)$ holds with equality, we have \prettyref{eq:mmse_formula} holds
for almost all $\beta \in [0,1]$.  Since $\MMSE(\beta)$ is a non-increasing function of $\beta$, 
its limit $\lim_{n \to \infty}   \MMSE(\beta)$ is also non-increasing in $\beta$.  Therefore,  \prettyref{eq:mmse_formula} 
holds for all $\beta \in [0, 1]$.  This completes the proof of our claim that the optimal MMSE estimator cannot outperform the trivial one asymptotically.

To show that the optimal estimator actually converges to the trivial one, we expand the definition of $\MMSE(\beta)$ in~\prettyref{eq:mmse-def} and subtract~\prettyref{eq:mmse_formula} from it.  This gives
\begin{equation}
\lim_{n \to \infty} \frac{1}{n} \expect{ - 2\iprod{M}{\expect{M|A} } + \| \expect{M|A} \|_{\rm F}^2} = 0 \, . 
\label{eq:mmse-2}
\end{equation}
By the tower property of conditional expectation and the linearity of the inner product, it follows 
$$
	\Exp \, \iprod{M}{\expect{M|A} }
 	= \Exp \, \iprod{\expect{M|A}}{\expect{M|A}}
 	= \Exp \, \| \expect{ M|A} \|_{\rm F}^2 \, ,
$$
and combining this with~\prettyref{eq:mmse-2} gives
\begin{equation}
\label{eq:trivialest}
	\lim_{n \to \infty} \frac{1}{n} \Exp \, \| \expect{M|A} \|_{\rm F}^2 = 0 \, .
\end{equation}

Finally, for any estimator $\hat{\sigma}(A)$ of the community membership $\sigma$, we can define an estimator for $M$ by
$\hat{M} = (\mu/\sqrt{n} ) ( \hat{\sigma} \hat{\sigma}^\top - \identity)$.
Then using the Cauchy-Schwarz inequality, we have
\begin{align*}
	\Exp_{M, A} [\Iprod{M}{\hat{M}}] & = \Exp_{A} [\Iprod{ \expect{M|A} }{\hat{M}}]  \\
	& \le   \Exp_{A} \left[ \| \expect{M|A} \|_{\rm F} \| \hat{M}\|_{\rm F}  \right] \\
	& \le  \sqrt{ \Exp_{A} [ \| \expect{M|A} \|^2_{\rm F} ] } \times \mu\sqrt{n} \overset{\prettyref{eq:trivialest}}{=} o(n).
\end{align*}
Since $\Iprod{M}{\hat{M}} = \mu^2 ( \iprod{\sigma}{\hat{\sigma}}^2/n -1 )$, it follows
that $\Expect[\iprod{\sigma}{\hat \sigma}^2]=o(n^2)$ which further implies 
$ \expect{\left| \iprod{\sigma}{\hat \sigma} \right|}=o(n)$ by Jensen's inequality. 
Hence, correlated recovery of
$\sigma$ is impossible.


\medskip
In passing, we remark that while we focus on the binary symmetric community in this section, the proof techniques are widely applicable 
for many other high-dimensional inference problems such as detecting a single community~\cite{arias2013community}, 
sparse PCA, Gaussian mixture clustering~\cite{Banks16},
synchronization~\cite{PerryWeinBandeiraMoitra16}, and tensor PCA~\cite{PerryWeinBandeira16}. 
In fact, for more general $k$-symmetric community model with $P=\calN\left((k-1)\mu/\sqrt{n},1\right)$
and $Q=\calN\left(-\mu/\sqrt{n},1\right)$, the first moment method shows that both
detection and correlated recovery are information-theoretically possible when $\mu > 2 \sqrt{\log k/(k-1)}$ and
impossible when $\mu< \sqrt{2 \log (k-1)/(k-1)}$. The upper and lower bounds differ by a factor of $\sqrt{2}$ when $k$ is asymptotically
large. This gap of $\sqrt{2}$ is due to the looseness of the second moment lower bound. 
A more refined \emph{conditional} second lower
bound can be applied to show that the sharp IT threshold for detection and correlated recovery is 
$\mu=2\sqrt{\log k/k}(1+o_k(1))$ 
when $k\to \infty$~\cite{Banks16}. Complete, but not explicit, characterizations of information-theoretic reconstruction thresholds were obtained in~\cite{KrzakalaXuZdeborova16,Barbier16,LelargeMiolane16} for all finite $k$ through the Guerra interpolation technique and cavity method.

\subsection{Almost exact and exact recovery}
	\label{sec:exact}

In this subsection, we study almost exact and exact recovery using the single community model
as an illustrating example. The hidden community can be represented by its indicator vector $\xi \in \{0, 1\}^n$
such that $\xi_i=1$ if vertex $i$ is in the community and $\xi_i=0$ otherwise. 
Let $\xi^*$ denote the indicator of the true community 
and $\hat\xi=\hat\xi(A) \in \{0,1\}^n$ an estimator.
The only assumptions on the community size $K$ we impose are that $K/n$ is bounded away from one,
and, to avoid triviality, that $K\geq 2$.  Of particular interest is the case of  $K=o(n),$
where the community size grows sublinearly with respect to the network size.


\begin{definition}[Almost Exact Recovery]   \label{def:weak_recovery}
An estimator $\hat \xi$ is said to {\em almost exactly recover} $\xi^*$ if, as
$n \to \infty$,  $d_H(\xi^*, \hat\xi) / K \to 0$ in probability,  where $d_H$ denotes the Hamming distance.\index{Almost exact recovery}
\end{definition}

One can verify that the existence of an estimator satisfying \prettyref{def:weak_recovery} is equivalent to the existence of an estimator such that  $ \Expect[d_H(\xi^*, \hat\xi)] = o(K)$.

\begin{definition}[Exact Recovery]
\label{def:exact}
An estimator $\hat \xi$ {\em exactly recovers} $\xi^*,$ if, 
as $n \to \infty$,
$
\Prob[\xi^* \neq \hat\xi] \to 0,
$
where the probability is with respect to the randomness of $\xi^*$ and $A$.\index{Exact recovery}
\end{definition}

To obtain upper bounds on the thresholds for almost exact and exact recovery, we
turn to the MLE. 
Specifically,
\begin{itemize}
	\item To show the MLE achieves almost exact recovery, it suffices to prove that 
there exists $\epsilon_n=o(1)$ such that with high probability, 
$\P(A| \xi) < \P (A |\xi^*)$  for all $\xi$ with $d_H( \xi, \xi^*) \ge \epsilon_n K $. 
\item To show the MLE achieves exact recovery, it suffices to prove that
with high probability, $\P(A | \xi) < \P(A | \xi^*)$ for all $\xi \neq \xi^*$. 
\end{itemize}
This type of argument often involves two key steps. First, upper bound the probability that $\P(A| \xi) \ge \P (A |\xi^*)$ for a fixed 
$\xi$ using large deviation techniques. Second, take an appropriate union bound over all possible $\xi$ using a ``peeling'' argument which takes into account the fact that the further away $\xi$ is from $\xi^*$ the less likely for $\P(A| \xi) \ge \P (A |\xi^*)$ to occur. 
Below we discuss these two key steps in more details.

Given the data matrix $A$, a sufficient statistic for estimating the community $C^*$ is
the \emph{log likelihood ratio (LLR) matrix} $\bm{L} \in \reals^{n \times n}$, where
$L_{ij}=\log \frac{dP}{dQ}(A_{ij})$ for $i \neq j$ and $L_{ii}=0$.
For $S,T\subset [n]$,  define  
\begin{equation}
e(S,T) = \sum_{(i<j): (i,j) \in (S\times T) \cup (T\times S)} L_{ij}.
	\label{eq:eST}
\end{equation}
Let $\CML$ denote the maximum likelihood estimation (MLE) of $C^*,$ given by:
\begin{equation}
\CML=\argmax_{C\subset [n]} \{ e(C,C) : |C|= K \},
	\label{eq:MLE}
\end{equation}
which minimizes the error probability $\pprob{\widehat{C}\neq C^*}$ because $C^*$ is equiprobable by assumption.
It is worth noting that the optimal estimator that minimizes the misclassification rate (Hamming loss) is the bit-MAP decoder $\tilde\xi=(\tilde\xi_i)$, where 
$\tilde \xi_i \triangleq \argmax_{j\in\{0,1\}} \Prob[\xi_i=j|L]$. Therefore, although
the MLE is optimal for exact recovery, it need not be optimal for almost exact recovery;
nevertheless, we choose to analyze MLE due to its simplicity and it turns out to
be asymptotically optimal for almost exact recovery as well.

To state the main results, we introduce some standard notations associated
with binary hypothesis testing based on independent samples.  We assume the KL divergences $D(P\|Q)$ and $D(Q\|P)$ are finite.  
In particular, $P$ and $Q$ are mutually absolutely continuous, and the likelihood
ratio, $\frac{dP}{dQ},$  satisfies $\Expect_Q\left[ \frac{dP}{dQ} \right] = \Expect_P\left[ (\frac{dP}{dQ})^{-1} \right] =1$.
Let $L=\log \frac{dP}{dQ}$ denote the LLR.  The likelihood ratio test for
$n$ observations and threshold $n\theta$ is to declare $P$ to be the true distribution if
$\sum_{k=1}^{n} L_k \ge n\theta$ and to declare $Q$ otherwise.  
For $\theta \in [-D(Q\|P),D(P\|Q)]$,  the standard Chernoff bounds for error probability of this likelihood ratio test are given by:
\begin{align}
Q\qth{\sum_{k=1}^{n} L_k \ge n\theta} \leq \exp(-n E_Q(\theta)) \label{eq:LDupper1}
  \\
P\qth{\sum_{k=1}^{n} L_k \le n\theta} \leq \exp(-n E_P(\theta)) \label{eq:LDupper2},
\end{align}
where the log moment generating functions of $L$ are denoted by
 $\psi_Q(\lambda) = \log \Expect_Q[\exp(\lambda L)] $ and $\psi_P(\lambda) = \log \Expect_P[\exp(\lambda L)] 
= \psi_Q(\lambda + 1 )$ and the large deviations exponents are given by Legendre transforms of the
log moment generating functions:
\begin{align}
E_Q(\theta)  & = \psi_Q^*(\theta) \triangleq \sup_{\lambda \in \reals} \lambda \theta - \psi_Q(\lambda),  \label{eq:ratefunction}    \\
 E_P(\theta)  & =  \psi_P^*(\theta) \triangleq  \sup_{\lambda \in \reals} \lambda \theta - \psi_P(\lambda) =E_Q(\theta) -\theta.	 \label{eq:EP}
\end{align}
In particular, $E_P$ and $E_Q$ are convex functions. Moreover, since $\psi_Q'(0)=-D(Q\|P)$ and $\psi_Q'(1)=D(P\|Q)$, we have $E_Q(-D(Q\|P)) = E_P(D(P\|Q)) = 0$ and hence 
$E_Q(D(P\|Q))=D(P\|Q)$ and $E_P(-D(Q\|P))=D(Q\|P)$.

Under mild assumptions on the distribution $(P,Q)$ (cf.~\cite[Assumption 1]{HajekWuXu_one_info_lim15}) which are satisfied by both the Gaussian and the Bernoulli distributions, the sharp thresholds for almost exact and exact recovery under the single community model are given by the following result.

\begin{theorem}\label{thm:weak_general}
Consider the single community model with $P=\calN(\mu,1)$ and $Q=\calN(0,1)$, or 
$P=\Bern(p)$ and $Q=\Bern(q)$ with $\log \frac{p}{q}$ and $\log \frac{1-p}{1-q}$ bounded.  
If
\begin{equation}
	 K  \cdot D(P\|Q) \to \infty   \text{ and }
	 \liminf_{n\to\infty} \frac{(K-1)  D(P\|Q)}{\log \frac{n }{K}}> 2,  
	 \label{eq:weak-bdd_suff}
\end{equation}
then almost exact recovery is information-theoretically possible. If in addition to \prettyref{eq:weak-bdd_suff}, the following holds:
\begin{equation}
\liminf_{n\to\infty} \frac{K E_Q\pth{\frac{1}{K} \log \frac{n}{K}}}{\log n} > 1,
	\label{eq:voting-suff}
\end{equation}
then exact recovery is information-theoretically possible. 

Conversely, if almost exact recovery is information-theoretically possible, then 
\begin{equation}
	K \cdot D(P\|Q) \to \infty   \text{ and } \liminf_{n\to\infty} \frac{(K-1) D(P\|Q)}{\log \frac{n }{K}} \ge 2.
	\label{eq:weak-bdd_nec}
	\end{equation}
If exact recovery is information-theoretically possible, then in addition  to \prettyref{eq:weak-bdd_nec}, the following holds:
\begin{equation}
\liminf_{n\to\infty} \frac{K E_Q\pth{\frac{1}{K} \log \frac{n}{K}}}{\log n} \ge 1.
	\label{eq:voting-nec}
\end{equation}
\end{theorem}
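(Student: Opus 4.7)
The plan is to split the proof into achievability via MLE analysis and converses via genie-aided hypothesis testing, relying throughout on the large-deviation rate functions $E_P,E_Q$ in \eqref{eq:ratefunction}--\eqref{eq:EP}. The workhorse decomposition is the following: for any $C\subset[n]$ with $|C|=K$ and $r\triangleq|C\setminus C^*|=|C^*\setminus C|$,
\[
e(C,C) - e(C^*,C^*) \;=\; \sum_{(i,j)\,\text{gained}} L_{ij} \;-\; \sum_{(i,j)\,\text{lost}} L_{ij},
\]
where ``gained'' and ``lost'' refer to edges present in $e(C,C)$ but not in $e(C^*,C^*)$ and vice versa, each family has cardinality $N_r\triangleq r(K-(r+1)/2)$, the gained-edge LLRs are \iid with the law of $L$ under $Q$, the lost-edge LLRs are \iid with the law of $L$ under $P$, and the two families are independent.

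For achievability I would apply the Chernoff bounds \eqref{eq:LDupper1}--\eqref{eq:LDupper2} to $\Prob\{e(C,C)\ge e(C^*,C^*)\}$ at a threshold $\theta$ chosen adaptively in $r$, and then union-bound over $C$ via a peeling argument grouped by $r$, using the combinatorial count $\binom{K}{r}\binom{n-K}{r}\le\exp\bigl(r\log\tfrac{eK}{r}+r\log\tfrac{e(n-K)}{r}\bigr)$. For almost exact recovery the sum runs over $r\ge\epsilon K$ and is dominated by $r=\Theta(K)$ where $N_r=\Theta(K^2)$; condition~\eqref{eq:weak-bdd_suff} then makes the resulting bound $o(1)$, the factor $2$ being the price of $\log(n/K)$ per displaced vertex on each side of $C^*\triangle C$. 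For exact recovery the binding case is instead the single-vertex swap $r=1$, for which I would pick $\theta\approx\tfrac{1}{K}\log\tfrac{n}{K}$ so that $(K-1)E_Q(\theta)$ exceeds $\log(n-K)$; the union bound over the $K(n-K)$ swap pairs then vanishes exactly when \eqref{eq:voting-suff} holds, and intermediate $r$ are controlled by interpolating between these two regimes.

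For the converses I would use a genie that reveals $C^*$ on all but two vertices $u\in C^*$ and $v\notin C^*$, reducing the problem of telling them apart to a two-sample binary hypothesis test on the $K-1$ edges each has to $C^*\setminus\{u\}$. Matching lower bounds on the error probability of this test follow from the converse to \eqref{eq:LDupper1}--\eqref{eq:LDupper2}. For \eqref{eq:weak-bdd_nec}, if $(K-1)D(P\|Q)/\log(n/K)<2$, a first-moment computation shows that $\Omega(K)$ swaps beat the truth in likelihood; a second-moment (Paley--Zygmund) argument then passes from expectation to high probability by controlling overlaps between swap pairs, forcing $\Expect[d_H(\hat\xi,\xi^*)]=\Omega(K)$. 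For \eqref{eq:voting-nec}, a similar second-moment lower bound on the number of $v\notin C^*$ whose $(K-1)$-edge LLR sum exceeds $(K-1)\theta$ with $\theta=\tfrac{1}{K}\log\tfrac{n}{K}$ shows that a confusable pair survives with constant probability, precluding exact recovery. The main obstacle is matching the sharp constants in \eqref{eq:weak-bdd_suff}, \eqref{eq:voting-suff}, \eqref{eq:weak-bdd_nec}, \eqref{eq:voting-nec} without losing sub-polynomial factors: this requires optimizing $\theta$ carefully across the different $r$-regimes in the peeling and controlling correlations between distinct candidate swaps in the converse; a secondary subtlety is verifying that the Gaussian and Bernoulli cases satisfy the mild regularity on $(P,Q)$ that makes $E_P,E_Q$ smooth and the one-sided Chernoff bounds tight at the relevant thresholds.
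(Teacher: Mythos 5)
Your decomposition of $e(C,C)-e(C^*,C^*)$ into $N_r$ ``gained'' edges (iid $Q$-law LLRs) and $N_r$ ``lost'' edges (iid $P$-law LLRs) matches the paper's, and your treatment of the exact-recovery converse (genie reduction to testing $Q^{\otimes(K-1)}P^{\otimes(K-1)}$ vs.\ $P^{\otimes(K-1)}Q^{\otimes(K-1)}$, plus reverse Chernoff bounds showing a confusable swap pair survives at $\theta=\frac1K\log\frac nK$) is essentially the paper's argument. The achievability step, however, has a genuine gap: you bound $\Prob\{e(C,C)\ge e(C^*,C^*)\}$ per candidate $C$ by a Chernoff split at level $N_r\theta$ and then multiply \emph{both} resulting terms by the full count $\binom{K}{r}\binom{n-K}{r}$. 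No choice of $\theta$, even adapted to $r$, rescues this: requiring both $N_rE_P(\theta)$ and $N_rE_Q(\theta)$ to beat $r\log\frac nK$ forces $\theta$ toward the balanced point where $\min(E_P,E_Q)$ equals the Bhattacharyya/Chernoff exponent, which is strictly smaller than $D(P\|Q)$ (in the Gaussian case it is $D/2$), so you would prove the theorem only with a constant strictly larger than $2$ in \prettyref{eq:weak-bdd_suff}, and similarly you pay $\log(nK)$ on both exponents at $r=1$ instead of the $\log n$-vs-$\log K$ split needed for \prettyref{eq:voting-suff}. The paper's fix is structural, not a tuning of $\theta$: it singles out the statistic $e(S,C^*)$, which depends only on $S=C^*\setminus C$, and unions the event $\{e(S,C^*)\le m\theta\}$ only over the $\binom{K}{r}$ choices of $S$, so the huge factor $\binom{n-K}{r}$ multiplies only the $E_Q$ term; this lets one take $\theta$ near $D(P\|Q)$, where $E_Q(\theta)\approx D(P\|Q)$ and $E_P(\theta)$ is small but positive. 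For exact recovery it further singles out $e(T,T\cup C^*)$ (a function of $T$ alone, unioned over $\binom{n-K}{r}$ only) with the cross term $e(T,S)$ shown negligible, which is exactly what produces $E_3=(1-\epsilon)KE_P(\theta)-\log K$ and $E_4=(1-\epsilon)KE_Q(\theta)-\log n$ and hence the sharp condition \prettyref{eq:voting-suff}.

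The converse of the second condition in \prettyref{eq:weak-bdd_nec} is also off-target as proposed. A first-moment count of single swaps that beat the truth in likelihood is governed by $\Prob\{\sum^{K-1}L_i^{(Q)}\ge\sum^{K-1}L_i^{(P)}\}$, whose exponent is $(K-1)\min_\theta\{E_P(\theta)+E_Q(\theta)\}$, i.e.\ a R\'enyi-$1/2$/Chernoff quantity, not $(K-1)D(P\|Q)$; Paley--Zygmund on such swaps therefore cannot yield the KL-based constant-$2$ threshold. Moreover, for \emph{almost exact} recovery the MLE is not the Hamming-optimal estimator (the bit-MAP is), so ``many configurations beat the truth in likelihood'' does not by itself force $\Expect[d_H(\hat\xi,\xi^*)]=\Omega(K)$ for every estimator. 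The paper instead proves this condition by a rate-distortion/mutual-information count: any estimator with distortion $o(K)$ needs $I(A;\xi^*)\ge K\log\frac nK(1+o(1))$, while the channel only supplies $I(A;\xi^*)\le\binom K2 D(P\|Q)$, which is precisely where the constant $2$ comes from; the genie argument you describe is used (as in the paper) only to establish $K\,D(P\|Q)\to\infty$ and the exact-recovery condition \prettyref{eq:voting-nec}.
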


Next we sketch the proof of \prettyref{thm:weak_general}.

\paragraph{Sufficient conditions.}
 For any $C\subset[n]$ such that $|C|=K$ and $| C \cap C^*| = \ell$, 
let $S=C^*\setminus C$ and $T=C \setminus C^*$. Then 
$$
e(C, C) - e(C^*, C^*) = e( T, T) + e(T, C^*\setminus S) - e(S, C^*). 
$$
Let $m=  \binom{K}{2}-\binom{\ell}{2}$. 
Notice that $e(S,C^*)$ has the same distribution as $ \sum_{i=1}^m L_i$ under measure $P$;
$e(T,  T) + e(T, C^\ast \backslash S)$ has the same distribution as $ \sum_{i=1}^m L_i$ under measure $Q$
where $L_i$ are i.i.d.\ copies of $\log \frac{ \diff P}{ \diff Q} $.
It readily follows from large deviation bounds \prettyref{eq:LDupper1} and \prettyref{eq:LDupper2} that 
\begin{align}
\prob{ e(T, T) + e(T, C^* \setminus S) \ge m \theta } & \ge   \exp \left(- m E_Q(\theta) \right)  \nonumber \\
\prob{ e(S, C^*)  \leq m \theta } & \le \exp \left( - m E_P(\theta) \right).  \label{eq:LDbounds}
\end{align}

Next we proceed to  describe the union bound for the proof of almost exact recovery. Note that
to show MLE achieves almost exact recovery, it is equivalent to showing
$
\prob{ | \hat{C}_{\rm ML}  \cap C^* | \le (1-\epsilon_n) K }  =o(1),
$
The first layer of union bound is straightforward:
\begin{equation}
\left\{ | \hat{C}_{\rm ML}  \cap C^* | \le (1-\epsilon_n) K  \right\} = \cup_{\ell=0}^{ \lfloor (1-\epsilon_n) K \rfloor } 
\left\{ | \hat{C}_{\rm ML}  \cap C^* | = \ell  \right\}.
\label{eq:union1}
\end{equation}
For the second layer of union bound, one na\"ive way to proceed is 
\begin{align*}
\left\{ | \hat{C}_{\rm ML}  \cap C^* | = \ell  \right\} & \subset  \{  C\in \calC_\ell: e(C, C) \geq e(C^*,C^*) \}  \\
& = \cup_{C \in \calC_\ell } \left\{  e(C, C) \geq e(C^*,C^* \right\},
\end{align*} 
where $\calC_\ell = \{ C \subset [n]: |C| = K,  |C\cap C^*| = \ell \}$. 
However, this union bound is too loose because of the high correlations 
among $e(C, C) - e(C^*, C^*)$ for different $C \in \calC_\ell$. 
Instead, we use the following union bound.
Let $\calS_\ell = \{  S \subset C^*: |S| =K-\ell \}$ and $\calT_\ell = \{ T \subset (C^*)^c: |T| =K-\ell \}.$ Then for any $\theta\in\reals$,
\begin{align}
 \{   | \hat{C}_{\rm ML}  \cap C^* | = \ell \}  & \subset  \{  \exists S \in \calS_\ell, T \in \calT_\ell:   e(S, C^*)   \le e(T, T) +  e(T, C^*\backslash S)   \} \nonumber \\
 & \subset    \{  \exists S \in \calS_\ell : e(S, C^*)  \leq m \theta  \} \nonumber \\
 & ~~~~ \cup \{ \exists  S \in \calS_\ell, T \in \calT_\ell:  e(T,  T) +  e(T, C^\ast \backslash S) \ge m \theta \} \nonumber \\
 & \subset \cup_{S \in \calS_\ell} \{ e(S, C^*)  \leq m \theta  \}  \nonumber \\
 & ~~~~ \cup_{S \in \calS_\ell, T \in \calT_\ell} \{ e(T,  T) +  e(T, C^\ast \backslash S) \ge m \theta \}. \label{eq:union2}
\end{align}
Note that we single out $e(S, C^*)$ because the number of different choices of $S$, $\binom{K}{K-\ell}$,  
is much smaller than the number of different choices of $T$, $\binom{n-K}{K-\ell}$, when $K\ll n$. 
Combining the above union bound together with the large deviation bound \prettyref{eq:LDbounds} 
yields that 
\begin{align}
\prob{ | \hat{C}_{\rm ML}  \cap C^* | =\ell} \le   \binom{K}{K-\ell} e^{- m E_P(\theta)}  + \binom{n-K}{K-\ell} \binom{K}{K-\ell} e^{- m E_Q(\theta)}. 
\label{eq:MLE_tail_analysis}
\end{align}
Note that for any $\ell \leq (1-\epsilon)K$,
\begin{align*}
 \binom{K}{K-\ell} & \le \left( \frac{K e }{K-\ell}  \right) ^{K-\ell} \le \left( \frac{e}{\epsilon} \right)^{K-\ell} \\
  \binom{n-K}{K-\ell} & \le  \left( \frac{ (n-K) e }{K-\ell}  \right) ^{K-\ell} \le  \left( \frac{(n-K) e}{K \epsilon} \right)^{K-\ell}.
\end{align*}
Hence, for  any $\ell \leq (1-\epsilon)K$,
\begin{align}
\prob{ | \hat{C}_{\rm ML}  \cap C^* | =\ell}  \le \eexp^{-(K-\ell) E_1} + \eexp^{-(K-\ell) E_2}, \label{eq:weakProbupperbound}
\end{align}
where
\begin{align*}
E_1 &\triangleq \frac{1}{2} (K-1) E_P(\theta)  - \log \frac{\eexp}{\epsilon} , \\
E_2 &\triangleq \frac{1}{2} (K-1) E_Q(\theta)   - \log \frac{(n-K) \eexp^2}{K \epsilon^2}.
\end{align*}
Thanks to the second condition in \prettyref{eq:weak-bdd_suff}, 
we have $(K-1)  D(P\|Q) (1-\eta) \geq 2  \log \frac{n}{K}$ for some $\eta \in (0,1)$. 
Choose $\theta = (1-\eta) D(P\|Q)$. Under some mild assumption on $P$ and $Q$ which is
satisfied in Gaussian and Bernoulli case, we have $E_P(\theta)  \geq c \eta^2 D(P\|Q)$ for
some universal constant $c>0$. Furthermore, recall from \prettyref{eq:EP} that $E_P(\theta)= E_Q(\theta) - \theta$. 
Hence, since $K  D(P\|Q) \to \infty $ by the assumption \prettyref{eq:weak-bdd_suff}, by choosing $\epsilon= 1/\sqrt{KD(P\|Q)},$
we have $\min\{ E_1, E_2 \} \to \infty$. The proof for almost exact recovery is complete by
taking the first layer of union bound in \prettyref{eq:union1} over $\ell$. \\

For exact recovery, we need to show $ \prob{ | \hat{C}_{\rm ML}  \cap C^* | \le K-1}  =o(1)$.
Hence, we need to further bound $\prob{ | \hat{C}_{\rm ML}  \cap C^* | =\ell} $ for any $(1-\epsilon)K \le \ell \le K-1$. 
It turns out the previous union bound \prettyref{eq:union2} is no longer tight. Instead, using
$ e(T, T) + e(T, C^*\setminus S) = e(T, T\cup C^*) - e(T, S)$, we have the following union bound
\begin{align*}
 \{   | \hat{C}_{\rm ML}  \cap C^* | = \ell \}   
 & \subset \cup_{S \in \calS_\ell} \{ e(S, C^*)  \leq m_1 \theta_1  \}  \cup_{T \in \calT_\ell} \{ e(T, T\cup C^*) \ge m_2\theta_2 \}  \\
 & ~~~~ \cup_{S \in \calS_\ell, T \in \calT_\ell} \{ e(T, S) \le m_2 \theta_2 - m_1 \theta_1 \},
\end{align*}
where $m_1 = \binom{K}{2} - \binom{\ell}{2}$, $m_2= \binom{K-\ell}{2} + (K-\ell) K$, and $\theta_1,\theta_2$ are to be optimized.
Note that we further single out $e(T, T\cup C^*)$ because it only depends on $T \in \calT_\ell$ once $C^*$ is fixed. 
Since $(1-\epsilon)K \le \ell \le K-1$, we have $|T| = |S| = K-\ell \le \epsilon K$ and thus the effect of $e(T, S)$ can be neglected.
 Therefore, approximately we can set $\theta_1=\theta_2=\theta$ and get
\begin{align}
\prob{ | \hat{C}_{\rm ML}  \cap C^* | =\ell} \lesssim  \binom{K}{K-\ell} e^{- m_1 E_P(\theta)}  + 
 \binom{n-K}{K-\ell} e^{- m_2 E_P(\theta) },
\end{align}

Using $\binom{K}{K-\ell} \le K^{K-\ell}$, $\binom{n-K}{K-\ell} \le (n-K)^{K-\ell}$, $m_2 \ge m_1 \ge (1-\epsilon) (K-\ell) K$, 
we get that for any $(1-\epsilon)K \le \ell \le K-1$,
\begin{align}
\prob{ | \hat{C}_{\rm ML}  \cap C^* |=\ell}  \le \eexp^{-(K-\ell) E_3} + \eexp^{-(K-\ell) E_4}, \label{eq:weakProbupperbound1}
\end{align}
where
\begin{align*}
E_3 &\triangleq (1-\epsilon) K E_P(\theta)  - \log K , \\
E_4 &\triangleq (1-\epsilon) K E_Q(\theta) - \log n. 
\end{align*}
Note that $E_P(\theta)= E_Q(\theta) - \theta$.  Hence, we set
$\theta= (1/K) \log (n/K)$ so that $E_3=E_4$, which goes to $+\infty$ under the assumption of \prettyref{eq:voting-suff}.
The proof of exact recovery is complete by taking the union bound over all $\ell$.

\paragraph{Necessary conditions.} 
To derive lower bounds on the  almost exact recovery threshold, we resort to
a simple rate-distortion argument. Suppose $\hat \xi$ achieves almost exact recovery
of $\xi^*$. Then $\Expect[d_H(\xi, \hat \xi)] = \epsilon_n K$ with $\epsilon_n\to 0$. 
On the one hand, consider the following chain of inequalities,
which lower bounds the amount of information required for a distortion level $\epsilon_n$:
\begin{align*}
I(A; \xi^* ) & \overset{(a)}{\geq} I(\hat \xi; \xi^* )  \geq \min_{\Expect[d(\tilde\xi,\xi^*)] \leq \epsilon_n K} I(\tilde \xi; \xi^* )   \\
 & \geq H(\xi^* )- \max_{\Expect[d(\tilde\xi,\xi^*)] \leq \epsilon_n K}  H( \tilde\xi \oplus \xi^* ) \\
& \overset{(b)}{=} 
\log \binom{n}{K} - n h\pth{\frac{\epsilon_n K}{n}} \overset{(c)} {\ge } K \log \frac{n}{K} (1+o(1)),
\end{align*}
 where $(a)$ follows from the data processing inequality for mutual information since $\xi\to A \to \hat\xi$ forms a Markov chain, $(b)$ is due 
 to the fact that $\max_{\Expect[w(X)] \leq p n } H(X) = nh(p)$ for any $p \leq 1/2$ where 
\begin{equation}
h(p) \triangleq p \log \frac{1}{p} + (1-p) \log \frac{1}{1-p}
\label{eq:binaryentropy}
\end{equation}
 is the binary entropy function and $w(x)=\sum_i x_i$,
and $(c)$ follows from the  bound $\binom{n}{K}  \geq \pth{ \frac{n}{K} }^K$, 
the assumption $K/n$ is bounded away from one, and the bound $h(p) \leq -p\log p + p$ for $p\in [0,1]$.

On the other hand, consider the following upper bound on the mutual information: 
\begin{align*}
I(A; \xi^* )
= \min_\mathbb{Q} D({ \mathbb{P}_{A|\xi^* } } \| \mathbb{Q} |\mathbb{P}_{\xi^* })	
\leq D({ \mathbb{P}_{A|\xi^* } } \|Q^{\otimes \binom{n}{2}} |\mathbb{P}_{\xi^* })  
=  	\binom{K}{2} D(P \| Q),  
\end{align*}
where the first equality follows from the geometric interpretation of mutual information as ``information radius'', see, \eg, \cite[Corollary 3.1]{PW-it};
the last equality follows from the  tensorization property of KL divergence for product distributions.
Combining the last two displays, we conclude the 
second condition in \prettyref{eq:weak-bdd_nec} is necessary for almost exact recovery. 

To show the necessity of the first condition in \prettyref{eq:weak-bdd_nec}, we can reduce almost exact recovery to a local hypothesis testing via a genie-type argument.  Given $i,j \in [n]$, let $\xi_{\backslash i,j}$ denote $\{\xi_k\colon k \neq i,j\}$.
Consider the following binary hypothesis testing problem for determining $\xi_i$.
If $\xi_i=0$, a node $J$ is randomly and uniformly chosen from $\{j\colon \xi_j=1\}$, and we observe $(A, J, \xi_{\backslash i, J })$;
if $\xi_i=1$, a node $J$ is randomly and uniformly chosen from $\{j\colon \xi_j=0 \}$, and we observe $(A, J, \xi_{\backslash i, J } )$.
It is straightforward to verify that this hypothesis testing problem is equivalent to 
testing $H_0:Q^{\otimes (K-1)} P^{\otimes (K-1)}$ versus
 $H_1: P^{\otimes (K-1)} Q^{\otimes (K-1)}$.
 Let $\calE$ denote the optimal average probability of testing error, 
 $p_{e,0}$ denote the Type-I error probability, and $p_{e,1}$ denote the Type-II error probability. 
Then we have the following chain of inequalities:
\begin{align*}
\Expect[d_H(\xi, \hat \xi)]  &  \geq \sum_{i=1}^n \min_{\hat \xi_i(A)} \Prob[\xi_i \neq  \hat \xi_i] 	  \\
& \geq  \sum_{i=1}^n \min_{\hat \xi_i ( A, J, \; \xi_{\backslash i, J})} \Prob[\xi_i \neq  \hat \xi_i]  \\
& =  n \min_{\hat \xi_1(A, J, \; \xi_{\backslash 1, J})} \Prob[\xi_1 \neq  \hat \xi_1] 	= n\calE. 
\end{align*}
By the assumption $\Expect[d_H(\xi, \hat \xi)]  =o(K)$, it follows that
$\calE =o(K/n)$. Under the assumption that $K/n$ is bounded away from one, $\calE =o(K/n)$ further 
implies that the sum of Type-I and II probabilities of error
$p_{e,0} + p_{e,1} = o(1)$, or equivalently,
$\text{TV}((P\otimes Q)^{\otimes K-1},(Q \otimes P)^{\otimes K-1}) \to 1$, where $\text{TV}(P,Q) \triangleq \int |\diff P-\diff Q|/2$ denotes the total variation distance.
Using $D(P\|Q) \geq \log \frac{1}{2(1-\text{TV}(P,Q))}$ \cite[Eqn.~(2.25)]{Tsybakov09} and the tensorization property of KL divergence for product distributions,
we conclude that 
$
(K-1)(D(P\|Q)+D(Q\|P)) \to \infty
$ 
is necessary for almost exact recovery. 
It turns out that for both the Bernoulli and Gaussian distributions as specified in
the theorem statement, $D(P\|Q) \asymp D(Q\|P)$ and hence $K D(P\|Q) \to \infty$
is necessary for almost exact recovery. 

Clearly, any estimator achieving exact recovery also achieves almost exact recovery. Hence
lower bounds for almost exact recovery hold automatically for exact recovery. 
Finally, we show the necessity of \prettyref{eq:voting-nec} for exact recovery. Since 
the MLE minimizes the error probability among all estimators if the true community $C^*$ is uniformly
distributed, it follows that if exact recovery is possible, then 
with high probability, $C^*$ has a strictly
higher likelihood than any other community $C \neq C^*$, in particular, $C=C^* \setminus \{i\} \cup \{  j\}$
for any pair of two vertices $i \in C^*$ and $j \notin C^*$. To further illustrate the proof ideas,
consider Bernoulli case of the single community model. Then $C^*$ has a strictly
higher likelihood than $C^* \setminus \{i\} \cup \{  j\}$ if and only if $e(i,C^*)$, the number of edges
connecting $i$ to vertices in $C^*$,  is larger than $e(j, C^* \setminus \{i\})$, the number of edges connecting $j$ to vertices in $C^*\setminus \{i\}$.
Therefore, with high probability, it holds that
\begin{align}
\min_{i \in C^*}  e( i , C^*) > \max_{j \notin C^*} e(j, C^* \setminus \{i_0 \} ) , \label{eq:mle_converse_cond}
\end{align}
where $i_0$ is the random index such that $i_0 \in \arg\min_{i \in C^*} e(i,C^*)$. 
Note that  $e(j, C^* \setminus \{i_0 \} )$'s are iid
for different $j \notin C^*$ and hence  a large-probability lower bound
to their maximum can be derived using inverse concentration inequalities. 
Specifically, for the sake of argument by contradiction, suppose that 
\prettyref{eq:voting-nec} does not hold. Furthermore, for ease of presentation,
assume the large deviation inequality \prettyref{eq:LDupper1} also holds in the
reverse direction (cf. \cite[Corollary 5]{HajekWuXu_one_info_lim15} for a precise
statement). Then it follows that 
$$
\prob{ e(j, C^* \setminus \{i_0 \} ) \ge \log \frac{n}{K} } \gtrsim 
\exp \left( - K E_Q \left( \frac{1}{K} \log \frac{n}{K}  \right) \right)
\ge n^{-1+\delta}
$$
for some small $\delta>0$. Since $e(j, C^* \setminus \{i_0 \} )$'s are iid and there
are $n-K$ of them, it further follows that with a large probability, 
$$
\max_{j \notin C^*} e(j, C^* \setminus \{i_0 \} ) \ge \log \frac{n}{K}.
$$
Similarly, by assuming the large deviation inequality \prettyref{eq:LDupper2} also holds in the
opposite direction and using the fact that $E_P(\theta)=E_Q(\theta)-\theta$, we get that
$$
\prob{e( i , C^*)  \le  \log \frac{n}{K} } \gtrsim \exp \left( - K E_P \left( \frac{1}{K} \log \frac{n}{K}  \right) \right)
\ge K^{-1+\delta}.
$$
Although $e(i, C^*)$'s are not independent
for different $i \in C^*$, the dependency is weak and can be controlled properly. Hence, following
the same argument as above,  we get that with a large probability, 
$$
\min_{i \in C^*}  e( i , C^*)  \le  \log \frac{n}{K}.
$$
Combining the large-probability lower and upper bounds and \prettyref{eq:mle_converse_cond} yields 
the contradiction. Hence, \prettyref{eq:voting-nec} is necessary for exact recovery.

\begin{remark}
Note that instead of using MLE, one could also apply a two-step procedure to achieve exact recovery: first use an estimator capable of almost exact recovery and then clean up the residual errors through a local voting procedure for
every vertex. Such a two-step procedure has been analyzed in \cite{HajekWuXu_one_info_lim15}.  From the 
computational perspective, for both the Bernoulli and Gaussian cases:
\begin{itemize}
\item if $K=\Theta(n)$, a linear-time degree-thresholding algorithm achieves the information limit of weak recovery (see 
\cite[Appendix A]{HajekWuXu_one_beyond_spectral15} and \cite[Appendix A]{HajekWuXu_MP_submat15});
\item if $K=\omega(n/\log n)$, whenever information-theoretically possible, exact recovery can
be achieved in polynomial time using  semi-definite programming \cite{HajekWuXu_one_sdp15};
\item if $K \geq \frac{n}{\log n} (1/(8e) + o(1))$ for Gaussian case and $K \geq \frac{n}{\log n} (\rho_{\sf BP}(p/q) + o(1))$
for Bernoulli case, exact recovery can be attained in nearly linear time via message passing plus clean up~\cite{HajekWuXu_one_beyond_spectral15,HajekWuXu_MP_submat15} whenever information-theoretically possible.
Here $\rho_{\sf BP} (p/q)$ denotes a constant only depending on $p/q$.
\end{itemize}
However, it remains open 
whether any polynomial time algorithm can achieve the respective information limit of weak recovery
for $K=o(n)$, or  exact recovery for $K \le \frac{n}{\log n} (1/(8e) -\epsilon)$ in the Gaussian case and 
for $K \le  \frac{n}{\log n} (\rho_{\sf BP}(p/q) -\epsilon)$ in the Bernoulli case, for any fixed $\epsilon>0$. \\

Similar techniques can be used to derive the almost exact and exact recovery 
thresholds for binary symmetric community model. For Bernoulli case,  
almost exact recovery  is efficiently achieved by a simple spectral method if $n(p-q)^2/(p+q) \to \infty$~\cite{yun2014adaptive},
which turns out to be also information-theoretically necessary~\cite{Mossel14}. 
Exact recovery threshold for binary community model has been derived 
and further shown be efficiently achievable by a two-step procedure consisting of 
spectral method plus clean-up~\cite{Abbe14,Mossel14}. 
For binary symmetric community model with general discrete distributions $P$ and $Q$, 
the information-theoretic limit of 
exact recovery is shown to be determined by the R\'enyi divergence of order $1/2$ between $P$ and 
$Q$~\cite{JL15}.
The analysis of MLE has been carried out under $k$-symmetric community models for general $k,$ and 
the information-theoretic exact recovery threshold has been identified in~\cite{ChenXu14} up to a universal
constant. The precise IT limit of exact recovery has been determined in~\cite{AbbeSandon15} for $k=\Theta(1)$ with
a sharp constant and further shown be to efficiently achievable by a polynomial-time two-step procedure. 
\end{remark}

\section{Computational limits}


In this section we discuss the computational limits (performance limits of all possible polynomial-time procedures) of detecting the planted structure under Planted Clique Hypothesis (to be defined later). To investigate the computational hardness of a given statistical problem, one main approach is to find an \emph{approximate randomized polynomial-time reduction}, 
which maps certain graph-theoretic problems, in particular, the \emph{planted clique} problem, to the our problem approximately in total variation, thereby showing these statistical problems are at least as hard as solving the  planted clique problem.

We focus on the single community model in \prettyref{def:single} and present results for both the submatrix detection problem 
(Gaussian) \cite{ma2013submatrix} and the community detection problem (Bernoulli) \cite{HajekWuXu14}. 
Surprisingly, under appropriate parameterizations, the two problems share the same ``easy-hard-impossible'' phase transition. 
As shown in \prettyref{fig:phase}, where the horizontal and vertical axis corresponds to the relative community size and the noise level respectively, the hardness of the detection has a sharp phase transition: optimal detection can be achieved by computationally efficient procedures for relatively large community, but provably not for small community. This is one of the first results in high-dimensional statistics where the optimal tradeoff between statistical performance and computational efficiency can be precisely quantified.
\begin{figure}[ht]%
\centering
\includegraphics[width=0.5\columnwidth]{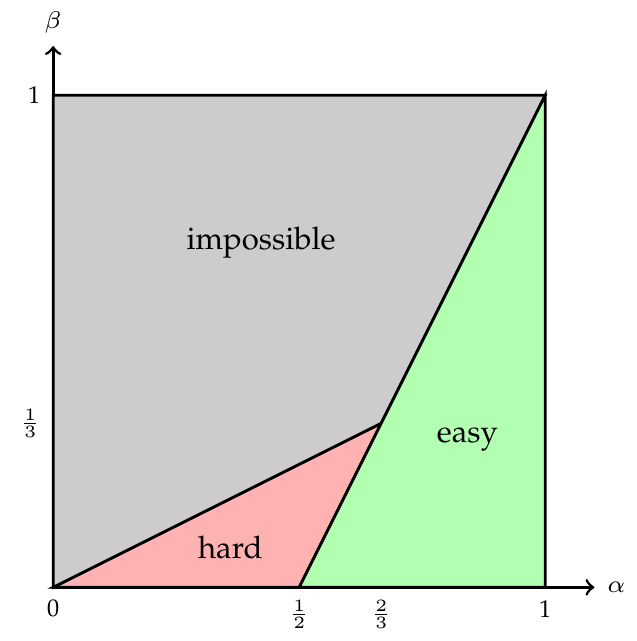}
	\caption{Computational versus statistical limits. For the submatrix detection problem, the size of the submatrix is $K = N^\alpha$ and the elevated mean is $\mu = N^{-\beta}$. For the community detection problem, the cluster size is $K=N^{\alpha}$, and the in-cluster and inter-cluster edge probability $p$ and $q $ are both on the order of $N^{-2\beta}$. }
	\label{fig:phase}	
\end{figure}
Specifically, consider the submatrix detection problem in the Gaussian case of \prettyref{def:single}, where $P=\calN(\mu,1)$ and $Q=\calN(0,1)$. In other words,  the goal is to test the null model, where the observation is an $N\times N$ Gaussian noise matrix, versus the planted model, where there exists a $K\times K$ submatrix of elevated mean $\mu$. Consider the high-dimensional setting of $K = N^\alpha$ and $\mu = N^{-\beta}$ with $N\diverge$, where $\alpha,\beta>0$ parametrizes the \emph{cluster size} and \emph{signal strength}, respectively. 
Information-theoretically, it can be shown that 
there exist detection procedures achieving vanishing error probability
if and only if $\beta < \beta^* \triangleq \max(\frac{\alpha}{2}, 2\alpha-1)$
 \cite{butucea2013}.
In contrast, if only \emph{randomized polynomial-time algorithms} are allowed, then reliable detection is impossible if $\beta > \beta^\sharp \triangleq \max(0, 2\alpha-1)$; conversely if $\beta < \beta^\sharp$, there exists a \emph{near-linear} time detection algorithm with vanishing error probability. Plotted in \prettyref{fig:phase}, the curve of $\beta^*$ and $\beta^\sharp$ corresponds to the  \textbf{statistical and computational limits} of submatrix detection respectively,  revealing the following striking phase transition: for large community ($\alpha \geq \frac{2}{3}$), optimal detection can be achieved by computationally efficient procedures; however, for small community ($\alpha < \frac{2}{3}$), computational constraint incurs a severe penalty on the statistical performance and the optimal computationally intensive procedure cannot be mimicked by any efficient algorithms. 

For the Bernoulli case, it is shown to detect a planted dense subgraph, when the in-cluster and inter-cluster edge probability $p$ and $q $ are on the same order and parameterized as $N^{-2\beta}$ and the cluster size as $K=N^{\alpha}$, the easy-hard-impossible phase transition obeys the same diagram as in \prettyref{fig:phase}~\cite{HajekWuXu14}.

Our intractability result is based on the common hardness assumption of the Planted Clique problem in the \ER graph
when the clique size is of smaller order than square root the graph cardinality \cite{Alon98}, which has been widely used to establish various hardness results in theoretical computer science  \cite{Hazan2011Nash,AAK07,KZ12,Kuvcera92,JP00,ABW10} as well as the hardness of detecting sparse principal components \cite{berthet2013lowerSparsePCA}. 
Recently, the average-case hardness of Planted Clique has been established under certain computation models \cite{Rossman10,Feldman13}
and within the sum-of-squares relaxation hierarchy~\cite{Meka15,DeshpandeMontanari15,barak-etal-planted-clique}.

The rest of the section is organized as follows:
\prettyref{sec:comp-pc} gives the precise definition of the Planted Clique problem, which forms the basis of reduction for both the submatrix detection and the community detection problem, with the latter requiring a slightly stronger assumption. 
\prettyref{sec:comp-pds} discusses how to approximately reduce the Planted Clique problem to
the single community detection problem in polynomial-time in both Bernoulli and Gaussian settings.  
Finally, \prettyref{sec:tv_bound} presents the key techniques to bound the total variation between the reduced instance and to the target hypothesis.

	\subsection{Planted Clique problem}
	\label{sec:comp-pc}

	Let $\calG(n,\gamma)$  denote the \ER graph model with $n$ vertices where each pair of vertices is connected independently with probability $\gamma$.	
	Let $\calG(n,k,\gamma)$ denote the planted clique model in which we add edges to $k$ vertices uniformly chosen from $\calG(n,\gamma)$ to form a clique.\index{Planted Clique problem}
\begin{definition}\label{def:PlantedCliqueDetection}
The \PC detection problem with parameters $(n,k,\gamma)$, denoted by $\PC(n,k,\gamma)$ henceforth, refers to the problem of testing the following hypotheses:
 \begin{align*}
H^{\rm C}_0: \quad  G \sim \mathcal{G}(n,\gamma),  \quad \quad H^{\rm C}_1: \quad  G \sim \mathcal{G}(n,k,\gamma).
\end{align*}
\end{definition}

The problem of finding the planted clique has been extensively studied for $\gamma=\frac{1}{2}$ and the state-of-the-art polynomial-time algorithms \cite{Alon98,FR00,McSherry01,Feige10findinghidden,Dekel10,ames2011plantedclique,Deshpande12} only work for $k=\Omega(\sqrt{n})$. There is no known polynomial-time solver for the \PC  problem for $k=o(\sqrt{n})$ and any constant $\gamma>0$. It is conjectured \cite{Jer92,Hazan2011Nash,JP00,AAK07,Feldman13} that the \PC problem cannot be solved in polynomial time for $k=o(\sqrt{n})$ with $\gamma =\frac{1}{2}$, which we refer to as the \PC Hypothesis.
\begin{hypothesis}[\PC Hypothesis]\label{hyp:HypothesisPlantedClique}
Fix some constant $0<\gamma \le \frac{1}{2}$. For any sequence of randomized polynomial-time tests $\{ \psi_{n,k_n} \}$ such that $\limsup_{n \to \infty} \frac{\log k_n }{ \log n} < 1/2$,
\begin{align*}
\liminf_{n \to \infty} \mathbb{P}_{H_0^{\rm C}} \{ \psi_{n,k} (G) =1 \} + \mathbb{P}_{H_1^{\rm C}} \{ \psi_{n,k}(G)=0 \} \ge 1.
\end{align*}
\end{hypothesis}
The \PC Hypothesis with $\gamma=\frac{1}{2}$ is similar to \cite[Hypothesis 1]{ma2013submatrix} and \cite[Hypothesis $\mathbf{B_{PC}}$]{berthet2013lowerSparsePCA}.
Our computational lower bounds for submatrix detection require that the \PC Hypothesis holds for $\gamma=\frac{1}{2}$ and for community detection we need to assume the \PC Hypothesis any positive constant $\gamma$.
An even stronger assumption that \PC Hypothesis holds for $\gamma=2^{-\log^{0.99} n}$ has been used in \cite[Theorem 10.3]{ABW10} for public-key cryptography.
 Furthermore, \cite[Corollary 5.8]{Feldman13} shows that under a statistical query model, any statistical algorithm requires at least $n^{\Omega(\frac{\log n}{ \log (1/\gamma) } ) }$ queries for detecting the planted bi-clique in an Erd\H{o}s-R\'enyi random bipartite graph with edge probability $\gamma$.

	\subsection{Polynomial-time randomized reduction}\index{Polynomial-time randomized reduction}
	\label{sec:comp-pds}

We present a polynomial-time randomized reduction scheme for the problem of detecting a single community (\prettyref{def:single})
in both Bernoulli and Gaussian cases. For ease of
presentation, we use the Bernoulli case as the main example, and discuss the minor 
modifications needed for the Gaussian case. The recent work~\cite{Brennan18} introduces a general reduction recipe for 
the single community detection problem under general $P, Q$ distributions, as well as various 
other detection problems with planted structures.

Let $\mathcal{G}(N,q)$ denote the Erd\H{o}s-R\'enyi random graph with $N$ vertices, where each pair of vertices is connected independently with probability $q$. Let $\mathcal{G}(N,K,p,q)$ denote the planted dense subgraph model with $N$ vertices where: (1) each vertex is included in the random set $S$ independently with probability $\frac{K}{N}$; (2) for any two vertices, they are connected independently with probability $p$ if both of them are in $S$ and with probability $q$ otherwise, where $p > q$. 
The planted dense subgraph here has a random size\footnote{We can also consider a planted dense subgraph with
a fixed size $K,$ where $K$ vertices are chosen uniformly at random to plant a dense subgraph with edge probability $p.$ 
Our reduction scheme extends to this fixed-size model; however, we have not been able to prove the distributions are
approximately matched under the alternative hypothesis. Nevertheless, the recent work~\cite{Brennan18} showed that the computational limit for detecting fixed-sized community is the same as \prettyref{fig:phase}, resolving an open problem in \cite{HajekWuXu14}.} with mean $K$, 
instead of a deterministic size $K$ as assumed in \cite{arias2013community,verzelen2013sparse}.
\begin{definition}\label{def:HypTesting}\index{Planted dense subgraph (PDS) problem}
The planted dense subgraph detection problem with parameters $(N,K,p,q)$, henceforth denoted by $\PDS(N,K,p,q)$, refers to the problem of distinguishing hypotheses:
 \begin{align*}
H_0: \quad & G \sim \mathcal{G}(N,q)\triangleq  \Prob_0,  \qquad H_1: \quad  G \sim \mathcal{G}(N,K,p,q) \triangleq  \Prob_1.
\end{align*}
\end{definition}

We aim to reduce the $\PC(n,k,\gamma)$ problem to the $\PDS(N,K,cq,q)$ problem.
For simplicity, we focus on the case of $c=2$; the general case follows similarly with a change
in some numerical constants that come up in the proof.
We are given an adjacency matrix $A \in \{0,1\}^{n \times n}$, or equivalently, a graph $G,$  and with the help of additional
randomness, will map it to an adjacency matrix $\tA \in \{0,1\}^{N \times N},$ or equivalently, a graph $\tG$
such that the hypothesis $H_0^{\rm C}$ (resp.\ $H_1^{\rm C}$) in \prettyref{def:PlantedCliqueDetection} is mapped to $H_0$ exactly (resp.\ $H_1$ approximately) in \prettyref{def:HypTesting}. In other words, if $A$ is drawn from $\calG(n,\gamma)$, then $\tA$ is distributed according to $\Prob_0$; If $A$ is drawn from $\calG(n,k,1,\gamma)$, then the distribution of $\tA$ is close in total variation to $\Prob_1$.

Our reduction scheme works as follows. Each vertex in $\tG$ is randomly assigned a parent vertex in $G,$
with the choice of parent being made independently for different vertices in $\tG,$  and uniformly
over the set $[n]$ of vertices in $G.$  Let $V_s$ denote the set of vertices in $\tG$ with parent
$s\in [n]$  and let $\ell_s=|V_s|$. Then the set of children nodes $\{V_s: s \in [n]\}$ form a random partition of $[N]$.
For any $1 \leq s \leq t \leq n,$ the number of edges, $E(V_s,V_t)$, from vertices in $V_s$ to vertices in $V_t$
in $\tG$ will be selected randomly with a conditional probability distribution specified below.
Given  $E(V_s,V_t),$  the particular  set of edges with cardinality $E(V_s,V_t)$ is chosen uniformly at
random.

It remains to specify, for $1\leq s \leq t \leq n,$
the conditional distribution of $E(V_s,V_t)$ given $\ell_s, \ell_t,$ and $A_{s,t}.$
Ideally, conditioned on $\ell_s$ and $\ell_t$, we want to construct
a Markov kernel from $A_{s,t}$ to $E(V_s,V_t)$ which maps  $\Bern(1)$  to the desired edge distribution $\Binom(\ell_s\ell_t,p)$, and $\Bern(1/2)$  to $\Binom(\ell_s\ell_t,q)$, depending on whether both $s$ and $t$ are in the clique or not, respectively. Such a kernel, unfortunately, provably does not exist. Nevertheless, this objective can be accomplished approximately in terms of the total variation. For $s=t \in [n],$  let $E(V_s,V_t)\sim\Binom( \binom{\ell_t}{2}, q).$
For $ 1 \le s < t \le n$, denote $P_{\ell_s \ell_t} \triangleq \Binom(\ell_s \ell_t,p)$ and $Q_{\ell_s \ell_t} \triangleq \Binom(\ell_s \ell_t,q)$.
Fix $0 < \gamma \leq \frac{1}{2}$ and put $m_0 \triangleq \lfloor \log_2 (1/\gamma) \rfloor$.
Define
    \begin{align*}
    P'_{\ell_s \ell_t} (m)=  \left\{
    \begin{array}{rl}
    P_{\ell_s \ell_t} (m) + a_{\ell_s \ell_t} & \text{for } m=0,\\
    P_{\ell_s \ell_t}(m) & \text{for } 1 \le m \le m_0, \\
    \frac{1}{\gamma} Q_{\ell_s \ell_t} (m) & \text{for } m_0 < m \le \ell_s \ell_t .
    \end{array} \right.
    \end{align*}
    where $a_{\ell_s \ell_t}=\sum_{m_0<m \le  \ell_s\ell_t} [ P_{\ell_s \ell_t}(m) - \frac{1}{\gamma} Q_{\ell_s \ell_t}(m) ]$.
    Let $Q'_{\ell_s \ell_t} = \frac{1}{1-\gamma} (Q_{\ell_s \ell_t} - \gamma P'_{\ell_s \ell_t})$.
The idea behind our choice of $P'_{\ell_s \ell_t}$ and $Q'_{\ell_s \ell_t}$ is as follows. 
For a given $P'_{\ell_s \ell_t}$, we choose $Q'_{\ell_s \ell_t}$ to
map $\Bern(\gamma)$ to $\Binom(\ell_s\ell_t,q)$ exactly;
however, for $Q'$ to be a well-defined
probability distribution, we need to ensure that $Q_{\ell_s \ell_t} (m) \ge \gamma P'_{\ell_s \ell_t}(m)$, which fails when $m \leq m_0$. Thus,
we set $P'_{\ell_s \ell_t} (m) = Q_{\ell_s \ell_t} (m) /\gamma$ for $m>m_0$. The remaining probability mass $a_{\ell_s \ell_t}$ is added to $P_{\ell_s \ell_t} (0)$ so that $P'_{\ell_s \ell_t}$ is a well-defined probability distribution.

It is straightforward to verify that $Q'_{\ell_s \ell_t}$ and $P'_{\ell_s \ell_t}$ are well-defined probability distributions, and
\begin{align}
d_{\rm TV} ( P'_{\ell_s \ell_t}, P_{\ell_s\ell_t} ) \le  4 (8q \ell^2)^{(m_0+1)}. \label{eq:TV_bound}
\end{align}
as long as $\ell_s, \ell_t \le 2 \ell$ and $16 q \ell^2 \le 1$, where $\ell=N/n$.  
Then, for $1\leq s < t \leq n,$ the conditional distribution of $E(V_s,V_t)$ given $\ell_s,\ell_t,$ and $A_{s,t}$ is given by
    \begin{align}
    E(V_s, V_t) \sim \left\{
    \begin{array}{rl}
    P'_{\ell_s \ell_t} & \text{if } A_{st}=1, \ell_s, \ell_t \le 2\ell\\
    Q'_{\ell_s \ell_t} & \text{if } A_{st}=0, \ell_s, \ell_t \le 2 \ell \\
    Q_{\ell_s \ell_t} & \text{if } \max\{ \ell_s, \ell_t \}> 2 \ell.
    \end{array} \right. \label{eq:edgedist}
    \end{align}

Next we show that the randomized reduction defined above maps $\calG(n,\gamma)$ into $\calG(N,q)$ under the null hypothesis and $\calG(n,k,\gamma)$ approximately  into $\calG(N,K,p,q)$ under the alternative hypothesis, respectively.
By construction, $(1-\gamma) Q'_{\ell_s \ell_t}+ \gamma P'_{\ell_s \ell_t} = Q_{\ell_s \ell_t}= \Binom(\ell_s \ell_t,q)$ and therefore the null distribution of the \PC problem is exactly matched to that of the \PDS problem, i.e., $P_{\tG|H_0^C}=\Prob_0$.
The core of the proof lies in establishing that the alternative distributions are approximately matched.
The key observation is that by \prettyref{eq:TV_bound}, $P'_{\ell_s \ell_t}$ is close to $P_{\ell_s \ell_t}= \Binom(\ell_s \ell_t,p)$ and thus for nodes with distinct parents $s \neq t$ in the planted clique, the number of edges $E(V_s,V_t)$ is approximately distributed as the desired $\Binom(\ell_s \ell_t,p)$; for nodes with the same parent $s$ in the planted clique, even though $E(V_s,V_s)$ is distributed as $\Binom(\binom{\ell_s}{2},q)$ which is not sufficiently close to the desired $\Binom(\binom{\ell_s}{2},p)$, after averaging over the random partition $\{V_s\}$, the total variation distance becomes negligible.  More formally, we have the following proposition; the proof is postponed to
the next subsection.

\begin{proposition}\label{prop:reduction}
Let $\ell, n \in \naturals$, $k \in [n]$ and $\gamma \in (0,\frac{1}{2} ]$. Let $N= \ell n$, $K=k\ell$, $p=2q$ and $m_0= \lfloor \log_2 (1/\gamma) \rfloor$. Assume that $16 q \ell^2 \le  1$ and $k \geq 6e \ell$.
If $G \sim \mathcal{G}(n, \gamma)$, then $\tG \sim \calG(N,q)$, \ie, $P_{\tG|H_0^C}=\Prob_0$.
If $G \sim \calG(n,k,1,\gamma)$, then
\begin{align}
& d_{\rm TV} \left(P_{\tG|H_1^C}, \Prob_1 \right)  \nonumber \\
& \le e^{-\frac{K}{12}} + 1.5 k e^{-\frac{\ell}{18}} + 2 k^2 (8q\ell^2)^{m_0+1} + 0.5 \sqrt{e^{72e^2 q \ell^2} -1} + \sqrt{0.5k} e^{-\frac{\ell}{36}}. \label{eq:defxi}
\end{align}
\end{proposition}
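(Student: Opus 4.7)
The null claim is a direct computation: for $s\ne t$ with $\ell_s,\ell_t\le 2\ell$, the marginal of $E(V_s,V_t)$ under $A_{st}\sim\Bern(\gamma)$ equals $\gamma P'_{\ell_s\ell_t}+(1-\gamma)Q'_{\ell_s\ell_t}=Q_{\ell_s\ell_t}$ by construction of $Q'$, while for $s=t$ and for large blocks the count is $\Binom(\ell_s\ell_t,q)$ by fiat; since edges with prescribed counts are placed uniformly, $\tilde G\sim\calG(N,q)$. For the alternative case, the basic observation is that with $C\subset[n]$ the planted clique and $S'\triangleq\bigcup_{s\in C}V_s$, each vertex of $[N]$ lies in $S'$ independently with probability $k/n=K/N$ by the i.i.d.\ uniform parent assignment, so the marginal law of $S'$ already equals that of $S$ under $\Prob_1$; only the conditional edge law given $S'$ must be controlled.

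I would split the error with an intermediate measure $\mu_1$ obtained from the reduction by replacing, for each $s\ne t$ in $C$ with $\ell_s,\ell_t\le 2\ell$, the count $E(V_s,V_t)$ by the exact $P_{\ell_s\ell_t}=\Binom(\ell_s\ell_t,p)$. Then $d_{TV}(P_{\tilde G\mid H_1^C},\Prob_1)\le d_{TV}(P_{\tilde G\mid H_1^C},\mu_1)+d_{TV}(\mu_1,\Prob_1)$. The first term is handled by conditioning on the event $\calE=\{\ell_s\in[\ell/2,2\ell]\text{ for all }s\in C\}\cap\{|S'|\ge K/2\}$; binomial Chernoff bounds on $\ell_s\sim\Binom(N,1/n)$ and on $|S'|\sim\Binom(N,K/N)$ yield $\Prob(\calE^c)\le e^{-K/12}+1.5ke^{-\ell/18}$, and on $\calE$ a union bound over the $\binom{k}{2}\le k^2/2$ between-block pairs combined with \eqref{eq:TV_bound} contributes $2k^2(8q\ell^2)^{m_0+1}$.

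The main obstacle is bounding $d_{TV}(\mu_1,\Prob_1)$, where the two measures differ on extensively many edges: inside each block $V_s$ with $s\in C$ the reduction still uses $\Bern(q)$ while $\Prob_1$ uses $\Bern(p)$, so a naive pair-by-pair sum would be useless. The key idea is to view $\mu_1$, conditioned on $|S'|$, as a mixture over uniform random partitions $\pi$ of $S'$ with multinomial block sizes and to compute $\chi^2(\mu_1\|\Prob_1)$; introducing an independent copy $\pi'$ as in the identity preceding \eqref{eq:SBM_second_moment_eq} and factorizing across edges yields
\[
\chi^2(\mu_1\|\Prob_1)+1=\Exp_{\pi,\pi'}\!\bigl[\alpha^{N(\pi,\pi')}\bigr],\qquad \alpha\triangleq\frac{q^2}{p}+\frac{(1-q)^2}{1-p}=1+\tfrac{q}{2}+O(q^2),
\]
where $N(\pi,\pi')\triangleq\sum_{B\in\pi\wedge\pi'}\binom{|B|}{2}$ counts edges lying inside a common block of $\pi$ and $\pi'$; indeed, a case analysis on each edge shows the per-edge ratio equals $1$ unless the edge is intra-block under \emph{both} partitions.

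The critical payoff of the partition averaging is that the intersection sizes $|V_s\cap V'_{s'}|$ are hypergeometric with mean $\ell_s\ell'_{s'}/K'\approx\ell/k$, which gives $\Exp[N(\pi,\pi')]=O(\ell^2)$ \emph{independently of $k$} and in particular kills the naive $O(k\ell^2)$ bound one would get without averaging. Using $\alpha^N\le e^{(q/2+O(q^2))N}$ together with a moment generating function bound for the hypergeometric intersection sizes, valid on a high-probability concentration event whose complement has mass $\sqrt{0.5k}\,e^{-\ell/36}$ (extracted via Cauchy--Schwarz to peel the event off the $\chi^2$), and then applying $d_{TV}\le\tfrac12\sqrt{\chi^2}$, produces the last two terms $\tfrac12\sqrt{e^{72e^2 q\ell^2}-1}+\sqrt{0.5k}\,e^{-\ell/36}$; the explicit constant $72e^2$ absorbs the slack from $\ell_s\le 2\ell$ and from the variance corrections in the hypergeometric MGF. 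Summing all contributions yields the bound \eqref{eq:defxi}.
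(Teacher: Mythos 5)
Your proposal matches the paper's proof essentially step for step: the exact null matching via $\gamma P'+(1-\gamma)Q'=Q$, the observation that the marginal of $S'=\bigcup_{s\in C}V_s$ already has the right $\Binom(N,K/N)$ law, a triangle inequality through the intermediate measure that replaces between-block counts by exact $\Binom(\ell_s\ell_t,p)$, the TV bound \eqref{eq:TV_bound} for the first gap, and a $\chi^2$ computation over two independent random partitions with per-edge ratio $\alpha=q^2/p+(1-q)^2/(1-p)=(1-\tfrac32 q)/(1-2q)$ for the second gap, closed by MGF bounds for the intersection sizes $|V_s\cap\tV_t|$ on a concentration event (this is exactly the conditional second moment method the paper invokes as \cite[Lemma 9]{HajekWuXu14}, together with negative association and stochastic domination for the product of MGFs). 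The only cosmetic difference is your choice of concentration event — a two-sided bound on $\ell_s$ and a lower bound $|S'|\ge K/2$, versus the paper's one-sided $|S|\le 1.5K$ and $|V_t|\le 2\ell$, which suffice because the paper bounds $|V_s\cap\tV_t|$ by stochastic domination with $\Binom(1.5K,1/k^2)$ rather than via conditional hypergeometrics.
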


\paragraph{Reduction scheme in the Gaussian case}
The same reduction scheme can be tweaked slightly to work for the Gaussian case, which, in fact, only needs the \PC hypothesis for $\gamma = \frac{1}{2}$.\footnote{The original proof in \cite{ma2013submatrix} for the submatrix detection problem crucially relies on the Gaussianity of the reduction maps a bigger planted clique instance into a smaller instance for submatrix detection by means of averaging.}
In this case, we aim to map an adjacency matrix $A \in \{0,1\}^{n \times n}$ 
to a symmetric data matrix $\tA \in \reals^{N \times N}$ with zero diagonal, or equivalently, a 
\emph{weighted complete} graph $\tG$.

For any $1 \leq s \leq t \leq n,$ we let $E(V_s, V_t)$
denote the average weights of edges between $V_s$ and $V_t$ in $\tG$. 
Similar to the Bernoulli model, we will first generate 
$E(V_s, V_t)$ randomly with a properly chosen conditional probability distribution. 
Since $E(V_s,V_t)$ is a sufficient statistic for the set of Gaussian edge weights, the specific weight assignment 
can be generated from the average weight using the same kernel for both the null and the alternative. 

To see how this works, consider a general setup where 
$X_1,\ldots,X_n \iiddistr \calN(\mu,1)$. Let $\bar X = (1/n) \sum_{i=1}^n X_i$. 
Then we can simulate $X_1,\ldots,X_n$ based on the sufficient statistic $\bar X$ as follows. 
Let $[v_0,v_1,\ldots,v_{n-1}]$ be an orthonormal basis for $\reals^n$, with $v_0= \frac{1}{\sqrt{n}} \ones$ and $\ones = (1,\ldots,1)^\top$.
Generate $Z_1,\ldots,Z_{n-1}\iiddistr \calN(0,1)$. Then $\bar X \ones + \sum_{i=1}^{n-1} Z_i v_i \sim \calN(\mu \ones,I_n)$.
Using this general procedure, we can generate the weights 
$\tA_{V_s,V_t}$ 
based on $E(V_s, V_t)$.

It remains to specify, for $1\leq s \leq t \leq n,$
the conditional distribution of $E(V_s,V_t)$ given $\ell_s, \ell_t,$ and $A_{s,t}.$
Similar to the Bernoulli case, conditioned on $\ell_s$ and $\ell_t$, ideally we would want to find
a Markov kernel from $A_{s,t}$ to $E(V_s,V_t)$ which maps  
$\Bern(1)$  to the desired distribution $\calN(\mu, 1/\ell_s \ell_t)$, 
and $\Bern(1/2)$  to $\calN(0,1/\ell_s\ell_t)$, 
depending on whether both $s$ and $t$ are in the clique or not, respectively. 
This objective can be accomplished approximately in terms of the total variation. 
For $s=t \in [n],$  let $E(V_s,V_t)\sim \calN( 0, 1/\ell_s\ell_t).$
For $ 1 \le s < t \le n$, denote 
$P_{\ell_s \ell_t} \triangleq \calN( \mu, 1/\ell_s \ell_t)$ and 
$Q_{\ell_s \ell_t} \triangleq \calN(0, 1/\ell_s \ell_t)$, 
with density function $p_{\ell_s\ell_t}(x)$ and $q_{\ell_s\ell_t}(x)$, respectively.

Fix $\gamma = \frac{1}{2}$.
Note that
\[
\frac{q_{\ell_s\ell_t}(x)}{p_{\ell_s\ell_t}(x)} = \exp \left[ \ell_s \ell_t \mu ( \mu/2 -x) \right] \geq \gamma
\]
if and only if $x \leq x_0 \triangleq \frac{\mu}{2}+\frac{1}{\mu\ell_s\ell_t} \log \frac{1}{\gamma}$.
Therefore, we define $P'_{\ell_s\ell_t}$ and $Q'_{\ell_s\ell_t}$ with the following density:
$q'_{\ell_s \ell_t} = \frac{1}{1-\gamma} (q_{\ell_s \ell_t} - \gamma p'_{\ell_s \ell_t})$ and 
  \begin{align*}
    p'_{\ell_s \ell_t} (x)= 
		\left\{
    \begin{array}{rl}
    p_{\ell_s \ell_t}(x) + f_{\ell s \ell_t} (2\mu-x ) & \text{for } x < 2\mu- x_0,\\
    p_{\ell_s \ell_t}(x) & \text{for } x \le x_0, \\
    \frac{1}{\gamma} q_{\ell_s \ell_t} (x) & \text{for } x>x_0.
    \end{array} \right.
    \end{align*}
    where $f_{\ell s \ell_t} (x) = p_{\ell_s \ell_t}(x) - \frac{1}{\gamma} q_{\ell_s \ell_t}(x)$.
    Let
    $$
    a_{\ell_s \ell_t}=\int_{x_0}^{\infty} f_{\ell s \ell_t} (x) dx
    \leq \bar\Phi \left( - \frac{\mu}{2}\sqrt{\ell_s\ell_t} + \frac{1}{\mu\sqrt{\ell_s\ell_t}} \log \frac{1}{\gamma} \right).
    $$	
Similar to the Bernoulli case, it is straightforward to verify that $Q'_{\ell_s \ell_t}$ and $P'_{\ell_s \ell_t}$ are well-defined probability distributions, and
\begin{align}
d_{\rm TV} ( P'_{\ell_s \ell_t}, P_{\ell_s\ell_t} )
=a_{\ell_s \ell_t} 
 \le \bar\Phi \left(\frac{1}{2\mu\sqrt{\ell_s\ell_t}} \log \frac{1}{\gamma} \right) 
\leq \exp\left(- \frac{1}{32 \mu^2 \ell^2} \log^2 \frac{1}{\gamma} \right). \label{eq:TV_bound-gaussian}
\end{align}
as long as $\ell_s, \ell_t \le 2 \ell$ and $4 \mu^2 \ell^2 \le \log (1/\gamma)$, where $\ell=N/n$. 
Following the same argument as Bernoulli case, we can obtain a counterpart to \prettyref{prop:reduction}.
\begin{proposition}\label{prop:reduction_gaussian}
Let $\ell, n \in \naturals$, $k \in [n]$ and $\gamma=1/2$. Let $N= \ell n$ and 
$K=k\ell$ Assume that 
$16 \mu^2 \ell \le 1$ and $k \geq 6e \ell$.
Let $\Prob_0$ and $\Prob_1$ denote the desired null and alternative distributions of the submatrix detection problem $(N,K,\mu)$. 
If $G \sim \mathcal{G}(n, \gamma)$, then  $P_{\tG|H_0^C}=\Prob_0$.
If $G \sim \calG(n,k,1,\gamma)$, then
\begin{align}
& d_{\rm TV} \left(P_{\tG|H_1^C}, \Prob_1 \right)  \nonumber \\
& \le e^{-\frac{K}{12}} + 1.5 k e^{-\frac{\ell}{18}} + 
\frac{k^2}{2} \exp \left( - \frac{ \log^2 2 }{32 \mu^2 \ell^2} \right)
+ 0.5 \sqrt{e^{72e^2 \mu^2 \ell^2} -1} + \sqrt{0.5k} e^{-\frac{\ell}{36}}. \label{eq:defxi_gaussian}
\end{align}
\end{proposition}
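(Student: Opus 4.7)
The plan is to follow the Bernoulli argument of \prettyref{prop:reduction} step by step, substituting the Gaussian kernel TV bound \prettyref{eq:TV_bound-gaussian} for \prettyref{eq:TV_bound} and reading $\mu^2$ in place of $q$ throughout (reflecting that $\chi^2(\calN(\mu,1)\|\calN(0,1))=e^{\mu^2}-1$ parallels the Bernoulli chi-squared scaling as $(p-q)^2/q$ when $p=2q$). The null-hypothesis identity $P_{\tG|H_0^C}=\Prob_0$ is immediate from construction: unconditionally on $A_{st}\sim\Bern(\gamma)$ and for every $(\ell_s,\ell_t)$, $E(V_s,V_t)\sim\gamma P'_{\ell_s\ell_t}+(1-\gamma)Q'_{\ell_s\ell_t}=Q_{\ell_s\ell_t}=\calN(0,1/\ell_s\ell_t)$, so the full edge weights reconstructed from this sufficient statistic are iid $\calN(0,1)$, matching $\Prob_0$ exactly. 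The remaining effort concerns the alternative.

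Under $H_1^C$ I would decompose the TV distance by the triangle inequality into three sets of contributions, handled sequentially on a good event controlling the random partition. \emph{(i) Partition concentration.} Let $S=\bigcup_{s\in K^*}V_s$ and $\mathcal{F}=\{\max_{s\in K^*}\ell_s\le 2\ell\}\cap\{|S|\ge K/2\}$. Since $|S|\sim\Binom(N,k/n)$ has mean $K$ and each $\ell_s$ is marginally $\Binom(N,1/n)$ with mean $\ell$, Chernoff bounds together with the hypothesis $k\ge 6e\ell$ yield $\Prob(\mathcal{F}^c)\le e^{-K/12}+1.5ke^{-\ell/18}$, producing the first two terms of \prettyref{eq:defxi_gaussian}. \emph{(ii) Cross-block clique edges.} For each of the $\binom{k}{2}\le k^2/2$ pairs $s<t$ in $K^*$, the kernel $P'_{\ell_s\ell_t}$ lies within total variation $\exp(-\log^2 2/(32\mu^2\ell^2))$ of the target $P_{\ell_s\ell_t}=\calN(\mu,1/\ell_s\ell_t)$ by \prettyref{eq:TV_bound-gaussian}. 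Since $E(V_s,V_t)$ is the sufficient statistic for the iid Gaussian edge weights inside $V_s\times V_t$ and the residuals can be generated identically under the reduction and the target via the orthonormal-basis construction described in the excerpt, a union bound over clique pairs yields the third term. Pairs with $A_{st}=0$ incur no TV cost because the marginal law of $E(V_s,V_t)$ there is exactly $Q_{\ell_s\ell_t}=\calN(0,1/\ell_s\ell_t)$, matching the target.

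The genuine obstacle, and the source of the fourth and fifth terms, is the \emph{within-block} mismatch: for $s\in K^*$, edges inside $V_s$ are generated from $\calN(0,1)$ even though $\Prob_1$ prescribes $\calN(\mu,1)$ there. This discrepancy cannot be killed pointwise and must be washed out by averaging over the labeling $\pi:S\to K^*$, which, conditional on $(S,\{\ell_s\}_{s\in K^*})$, is uniform among partitions of $S$ with prescribed block sizes. I would bound the chi-squared divergence between the $\pi$-marginal of the within-$S$ edge block and the target $\calN(\mu,1)^{\otimes\binom{|S|}{2}}$ as follows: taking two independent copies $\pi,\pi'$, a direct Gaussian likelihood-ratio computation shows that each per-edge factor integrates against the target to $\exp(\mu^2)$ when $\pi(u)=\pi(v)$ and $\pi'(u)=\pi'(v)$ both hold, and to $1$ otherwise, so $\chi^2+1=\bbE_{\pi,\pi'}[\exp(\mu^2 N_{\pi,\pi'})]$ where $N_{\pi,\pi'}$ counts jointly within-block pairs. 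On $\mathcal{F}$ one has $|S|\le 2K$ and $\max_s \ell_s\le 2\ell$, and an MGF estimate for this hypergeometric-type coincidence count (mirroring the Bernoulli computation term-by-term with $q\mapsto\mu^2$) gives $\chi^2\le \exp(72e^2\mu^2\ell^2)-1$; applying $\dTV\le\tfrac12\sqrt{\chi^2}$ produces the fourth term, while the fifth term $\sqrt{0.5k}\,e^{-\ell/36}$ absorbs an auxiliary tail bound on block sizes needed to make the MGF step hold uniformly. Assembling (i), (ii), and the within-block estimate by the triangle inequality delivers \prettyref{eq:defxi_gaussian}. The hardest part is this last step: verifying that the Gaussian $\chi^2$ expansion factorizes identically to the Bernoulli one and that the coincidence-count MGF bound transfers with the same numerical constants, which requires careful accounting of the random partition structure and is where the assumption $16\mu^2\ell\le 1$ is invoked.
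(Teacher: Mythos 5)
Your proposal is correct and is essentially the paper's own proof: the paper establishes \prettyref{prop:reduction_gaussian} by exactly this transfer of the Bernoulli argument, with the null matched exactly through $\gamma P'+(1-\gamma)Q'=Q$ and sufficiency of $E(V_s,V_t)$, the per-pair kernel bound \prettyref{eq:TV_bound-gaussian} summed over at most $k^2/2$ clique pairs, and the within-block ($\calN(0,1)$ versus $\calN(\mu,1)$) mismatch handled by the conditional second-moment method over two independent copies of the partition, where the per-pair factor is exactly $e^{\mu^2}$ (replacing the Bernoulli $\frac{1-\frac{3}{2}q}{1-2q}$) and the same negative-association and binomial-domination MGF lemmas give $\exp(72e^2\mu^2\ell^2)$, with the residual tail terms entering just as you describe. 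One small correction: the good event on $|S|$ must be an \emph{upper} bound (the paper uses $|S|\le 1.5K$, whose Chernoff failure probability yields $e^{-K/12}$), since the $\chi^2$ step needs the overlaps $|V_s\cap \tV_t|$ stochastically dominated by $\Binom(1.5K,1/k^2)$; your condition $|S|\ge K/2$ points the wrong way, though the rest of your accounting of the error terms is accurate.
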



Let us close this section with two remarks. First, to investigate the computational aspect of inference in the Gaussian model, since the computational complexity is not well-defined for tests dealing with samples drawn from non-discrete distributions, which cannot be represented by finitely many bits almost surely. 
To overcome this hurdle, we consider a sequence of \emph{discretized} Gaussian models that is \emph{asymptotically equivalent} to the original model in the sense of Le Cam~\cite{Lecam86} and hence preserves the statistical difficulty of the problem. 
In other words, the continuous model and its appropriately discretized counterpart are statistically indistinguishable and, more importantly, the computational complexity of tests on the latter are well-defined. More precisely, for the submatrix detection model, provided that each entry of the $n\times n$ matrix $A$ is quantized by $\Theta(\log n)$ bits, the discretized model is asymptotically equivalent to the previous model (cf.~\cite[Section 3 and Theorem 1]{ma2013submatrix} for a precise bound on the Le Cam distance).
With a slight modification, the above reduction scheme can be made to the discretized model (cf.~\cite[Section 4.2]{ma2013submatrix}).

Second, we comment on the distinctions between the reduction scheme here and the prior work that relies on planted clique as the hardness assumption. Most previous work \cite{Hazan2011Nash,AAK07,Alon11,ABW10} in the theoretical computer science
literature uses the reduction from the \PC  problem to generate computationally hard instances of other problems and establish \emph{worst-case} hardness results; the underlying distributions of the instances could be arbitrary. The idea of proving hardness of a hypothesis testing problem by means of approximate reduction from the planted clique problem such that the reduced instance is close to the target hypothesis in total variation originates from the seminal work by \cite{berthet2013lowerSparsePCA} and the subsequent paper by \cite{ma2013submatrix}.
The main distinction between these work and the results presented in this article based on the techniques in \cite{HajekWuXu14} is that  \cite{berthet2013lowerSparsePCA} studied a composite-versus-composite testing problem and \cite{ma2013submatrix} studied a simple-versus-composite testing problem, both in the minimax sense, as opposed to the simple-versus-simple hypothesis considered here and in \cite{HajekWuXu14}, which constitutes a stronger hardness result. For composite hypothesis, a reduction scheme works as long as the distribution of the reduced instance is close to \emph{some} mixture distribution under the hypothesis. This freedom is absent in constructing reduction for simple hypothesis, which renders the reduction scheme as well as the corresponding calculation of total variation considerably more difficult.  In contrast, for simple-versus-simple hypothesis, the underlying distributions of the problem instances generated from the reduction must be close to the desired distributions in total variation under both the null and alternative hypotheses.

\subsection{Bounding the total variation distance}
\label{sec:tv_bound}

Below we prove \prettyref{prop:reduction} and obtain the desired computational limits given by \prettyref{fig:phase}. We only consider the Bernoulli case as the derivations for Gaussian case are analogous. The main technical challenge is bounding the total variation distance in \prettyref{eq:defxi}. 

\begin{proof}[Proof of \prettyref{prop:reduction}]
Let $[i,j]$ denote the unordered pair of $i$ and $j$.
For any set $I \subset [N]$, let $\calE(I)$ denote the set of unordered pairs of distinct elements in $I$, i.e., $\calE(I)=\{[i,j]: i,j \in S, i\neq j\}$, and let $\calE(I)^c=\calE( [N] ) \setminus \calE(I)$.
For $s, t \in [n]$ with $s \neq t$, let $\tG_{V_sV_t}$ denote the bipartite graph where the set of left (right) vertices is $V_s$ (resp.\xspace $V_t$) and the set of edges is the set of edges in $\tG$ from vertices in $V_s$ to vertices in $V_t$. For $s \in [n]$, let $\tG_{V_sV_s}$ denote the subgraph of $\tG$ induced by $V_s$. Let $\tP_{V_sV_t}$ denote the edge distribution of $\tG_{V_sV_t}$ for $s, t \in [n]$.

It is straightforward to verify that the null distributions are exactly matched by the reduction scheme. Henceforth, we 
consider the alternative hypothesis, under which $G$ is drawn from the planted clique model $\mathcal{G}(n,k,\gamma)$. Let $C \subset [n]$ denote the planted clique. Define $S=\cup_{t \in C} V_t$ and recall $K=k \ell$. Then $|S| \sim \Binom(N, K/N)$ and conditional on $|S|$, $S$ is uniformly distributed over all possible subsets of size $|S|$ in $[N]$.
By the symmetry of the vertices of $G$, the distribution of $\tA$ conditional on $C$ does not depend on $C$. Hence, without loss of generality, we shall assume that $C=[k]$ henceforth.
The distribution of $ \widetilde{A}$ can be written as a mixture distribution indexed by the random set $S$ as
\begin{align*}
\widetilde{A} \sim \widetilde{\Prob}_1 \triangleq \mathbb{E}_{S} \left[\tP_{SS} \times   \prod_{ [i,j] \in \calE(S)^c } \Bern(q) \right].
\end{align*}
By the definition of $\Prob_1$,
\begin{align}
& d_{\rm TV} ( \widetilde{\Prob}_1, \Prob_1 ) \nonumber \\
&= d_{\rm TV} \left( \mathbb{E}_{S} \left[ \tP_{SS} \times \prod_{[i,j] \in \calE(S)^c } \Bern(q) \right],    \mathbb{E}_{S} \left[  \prod_{[i,j] \in \calE(S) } \Bern(p) \prod_{[i,j] \in \calE(S)^c } \Bern(q)   \right] \right) \nonumber \\
& \le \mathbb{E}_{S} \left[  d_{\rm TV} \left(  \tP_{SS} \times \prod_{[i,j] \in \calE(S)^c } \Bern(q), \prod_{[i,j] \in \calE(S) } \Bern(p) \prod_{[i,j] \in \calE(S)^c } \Bern(q)   \right) \right] \nonumber\\
& =\mathbb{E}_{S} \left[  d_{\rm TV} \left( \tP_{SS}, \prod_{[i,j] \in \calE(S) } \Bern(p)  \right) \right] \nonumber\\
& \le \mathbb{E}_{S} \left[  d_{\rm TV} \left( \tP_{SS}, \prod_{[i,j] \in \calE(S)} \Bern(p)  \right) \1{ |S| \le 1.5K }\right] + \exp (-K/12), \label{eq:TotalvariationDistance}
\end{align}
where the first inequality follows from the convexity of $(P,Q) \mapsto d_{\rm TV} (P,Q)$, and the last inequality follows from applying the Chernoff bound to $|S|$.
Fix an $S \subset [N]$ such that $|S| \le 1.5 K$. Define $P_{V_tV_t}=\prod_{[i,j] \in \calE(V_t)} \Bern(q)$ for $t \in [k]$ and
$P_{V_sV_t}= \prod_{(i,j) \in V_s \times V_t} \Bern(p)$ for $1 \le s<t \le k$.
By the triangle inequality,
\begin{align}
& d_{\rm TV} \left( \tP_{SS},
\prod_{[i,j] \in \calE(S)} \Bern(p)   \right) \nonumber \\
\le & ~d_{\rm TV} \left( \tP_{SS},  \mathbb{E}_{V_1^k } \left [  \prod_{1 \le s \le t\le k} P_{V_sV_t} \biggm \vert S \right]\right) \label{eq:TotalVarTriangle1} \\
& ~+d_{\rm TV}  \left( \mathbb{E}_{V_1^k} \left [  \prod_{1 \le s \le t\le k} P_{V_sV_t} \biggm \vert S \right], \prod_{[i,j] \in \calE(S)} \Bern(p) \right). \label{eq:TotalVarTriangle2}
\end{align}

To bound the term in \prettyref{eq:TotalVarTriangle1}, first note that conditioned on the set $S$, $\{V_1^k\}$ can be generated as follows: Throw balls indexed by $S$ into bins indexed by $[k]$ independently and uniformly at random; let $V_t$ is the set of balls in the $t^\Th$ bin. Define the event $E = \{V_1^k: |V_t| \leq 2 \ell, t \in[k]\}$. Since $|V_t| \sim \Binom(|S|,1/k)$ is stochastically dominated by $\Binom(1.5K,1/k)$ for each fixed $ 1 \le t\le k$, it follows from the Chernoff bound and the union bound that $\Prob\{E^c\} \le k \exp(-\ell/18)$.
\begin{align*}
& d_{\rm TV} \left(  \tP_{SS},  \mathbb{E}_{V_1^k} \left [  \prod_{1 \le s \le t \le k} P_{V_sV_t} \biggm \vert S \right]\right) \nonumber\\
&\overset{(a)}{=} d_{\rm TV} \left(  \mathbb{E}_{V_1^k} \left [  \prod_{1 \le s \le t \le k} \widetilde{P}_{V_s V_t}  \biggm \vert S\right],  \mathbb{E}_{V_1^k} \left [  \prod_{1 \le s \le t \le k} P_{V_sV_t} \biggm \vert S \right]\right) \nonumber \\
& \le \mathbb{E}_{V_1^k } \left [ d_{\rm TV} \left(  \prod_{1 \le s \le t \le k} \widetilde{P}_{V_sV_t},   \prod_{1 \le s \le t \le k} P_{V_sV_t} \right) \biggm \vert S \right] \nonumber\\
& \le \mathbb{E}_{V_1^k } \left [ d_{\rm TV} \left( \prod_{1 \le s \le t \le k} \widetilde{P}_{V_sV_t},   \prod_{1 \le s \le t \le k} P_{V_sV_t} \right)  \1{V_1^k \in E}\biggm \vert S \right] +k \exp(-\ell/18),
\end{align*}
where $(a)$ holds because conditional on $V_1^k$, $\{ \tA_{V_s V_t} : s, t \in [k] \}$ are independent.  Recall that $\ell_t=|V_t|$. For any fixed $V_1^k \in E$, we have
\begin{align*}
 & d_{\rm TV} \left( \prod_{1 \le s \le t \le k} \widetilde{P}_{V_sV_t},   \prod_{1 \le s \le t \le k} P_{V_sV_t} \right) \\
 &\overset{(a)}{=} d_{\rm TV} \left( \prod_{1\leq s<t \leq k} \widetilde{P}_{V_sV_t},   \prod_{1\leq s<t \leq k} P_{V_sV_t} \right)  \nonumber  \\
 &\overset{(b)}{=} d_{\rm TV} \left( \prod_{1\leq s<t \leq k} P'_{\ell_s \ell_t},  \prod_{1\leq s<t \leq k} P_{\ell_s \ell_t} \right) \nonumber  \\
& \le  d_{\rm TV} \left( \prod_{1\leq s<t \leq k} P'_{\ell_s \ell_t},  \prod_{1\leq s<t \leq k} P_{\ell_s \ell_t} \right) \nonumber \\
& \le \sum_{ 1 \le s< t \le k} d_{\rm TV} \left(   P'_{\ell_s \ell_t},  P_{\ell_s \ell_t} \right) \overset{(c)}{\le} 2 k^2 (8q \ell^2)^{(m_0+1)},
\end{align*}
where $(a)$ follows since $\tP_{V_tV_t}=P_{V_tV_t}$ for all $t \in [k]$;
 $(b)$ is because the number of edges $E(V_s,V_t)$ is a sufficient statistic for testing $\tP_{V_sV_t}$ versus $P_{V_sV_t}$ on the submatrix $A_{V_sV_t}$ of the adjacency matrix; $(c)$ follows from the total variation
 bound \prettyref{eq:TV_bound}. 
 Therefore,
\begin{align}
d_{\rm TV} \left(  \tP_{SS},  \mathbb{E}_{V_1^k} \left [  \prod_{1 \le s \le t \le k} P_{V_sV_t} \biggm \vert S \right]\right) \le 2 k^2 (8q \ell^2)^{(m_0+1)} +k \exp(-\ell/18).\label{eq:TotalVariationNondiagonal}
\end{align}

To bound the term in \prettyref{eq:TotalVarTriangle2}, applying \cite{HajekWuXu14}[Lemma 9], which is a conditional version of the second moment method, yields
\begin{align}
& d_{\rm TV}  \left( \mathbb{E}_{V_1^k } \left [ \prod_{1 \le s \le t \le k} P_{V_sV_t}\biggm \vert S \right],\prod_{[i,j] \in \calE(S) } \Bern(p)  \right) \nonumber\\
& \le \frac{1}{2} \prob{E^c} + \frac{1}{2}
\sqrt{\mathbb{E}_{  V_1^k; \tV_1^k} \left[ g (V_1^k , \tV_1^k) \indc{V_1^k \in E}\indc{\tV_1^k \in E} \biggm \vert S  \right] -1 + 2 \prob{E^c}}, \label{eq:mixturebound}
\end{align}
where
\begin{align*}
g (V_1^k , \tV_1^k) &= \int \frac{\prod_{1 \le s \le t \le k} P_{V_sV_t} \prod_{1 \le s \le t \le k} P_{\tV_s \tV_t}} {\prod_{[i,j]\in \calE(S)} \Bern(p) } \\
& =  \prod_{s,t=1}^k \left( \frac{q^2}{p} + \frac{(1-q)^2}{1-p} \right)^{ \binom{|V_s \cap \tV_t | }{2} } = \prod_{s,t=1}^k\left( \frac{1-\frac{3}{2} q }{1-2q} \right)^{ \binom{|V_s \cap \tV_t | }{2} }.
\end{align*}
Let $X \sim \text{Bin}(1.5K, \frac{1}{k^2})$ and $Y \sim \text{Bin}( 3 \ell, e/k)$. It follows that
\begin{align}
& \mathbb{E}_{  V_1^k; \tV_1^k} \left[ \prod_{s,t=1}^k \left( \frac{1-\frac{3}{2} q }{1-2q} \right)^{ \binom{|V_s \cap \tV_t | }{2} } \prod_{s,t=1}^k \indc{|V_s| \leq 2\ell, |\tV_t| \leq 2\ell} \biggm \vert S \right] \nonumber \\
& \overset{(a)}{\leq}   \mathbb{E}_{  V_1^k; \tV_1^k} \left[ \prod_{s,t=1}^k e^{ q \binom{|V_s \cap \tV_t | \wedge 2\ell}{2} } \biggm \vert S \right] \nonumber \\
& \stackrel{(b) }{\leq}  ~ \prod_{s,t=1}^k \expect{ e^{ q \binom{|V_s \cap \tV_t | \wedge 2\ell }{2} }  \biggm \vert S }	\nonumber \\
 & \overset{(c)}{ \le} \left( \mathbb{E} \left[ e^{q\binom{X \wedge 2\ell}{2}} \right ]  \right)^{k^2} \overset{(d)} {\le}    \mathbb{E} \left[ e^{q\binom{Y}{2}} \right]^{k^2} \overset{(e)}{\le}  \exp(72 e^2 q\ell^2), \label{eq:boundchisquare}
\end{align}
where $(a)$ follows from $1+x \le e^x$ for all $x\ge 0$ and $q<1/4$;
$(b)$ follows from the negative association property of $\{|V_s \cap \tV_t |: s,t\in[k]\}$ proved in
\cite{HajekWuXu14}[Lemma 10] in view of the monotonicity of $x \mapsto e^{q\binom{x \wedge 2\ell}{2}}$ on $\reals_+$; $(c)$ follows because $|V_s \cap \tV_t |$ is stochastically dominated by $\Binom(1.5K, 1/k^2)$ for all $(s, t) \in [k]^2$; $(d)$ follows from 
\cite{HajekWuXu14}[Lemma 11]);
$(e)$ follows from \cite{HajekWuXu14}[Lemma 12] with $\lambda=q/2$ and $q \ell \le 1/8$. Therefore, by \prettyref{eq:mixturebound}
\begin{align}
d_{\rm TV}  \left(  \tP_{SS},\prod_{[i,j] \in \calE(S) } \Bern(p)  \right) & \le0.5 k e^{-\frac{\ell}{18}} + 0.5 \sqrt{ e^{72e^2 q \ell^2} -1 + 2  k e^{-\frac{\ell}{18}}} \nonumber\\
& \le 0.5 k e^{-\frac{\ell}{18}} + 0.5 \sqrt{e^{72e^2 q \ell^2} -1} + \sqrt{0.5k} e^{-\frac{\ell}{36}}. \label{eq:TotalVariationDiagonal}
\end{align}
\prettyref{prop:reduction} follows by combining \prettyref{eq:TotalvariationDistance}, \prettyref{eq:TotalVarTriangle1}, \prettyref{eq:TotalVarTriangle2}, \prettyref{eq:TotalVariationNondiagonal} and \prettyref{eq:TotalVariationDiagonal}.
\end{proof}

The following theorem establishes the computational hardness of the $\PDS$ problem in the interior of the red region in \prettyref{fig:phase}. 
\begin{theorem}
\label{thm:pds}
   Assume \PC Hypothesis (\prettyref{hyp:HypothesisPlantedClique}) holds for all $0<\gamma \le 1/2$.  
Let $ \alpha>0$ and $0<\beta<1$ be such that
\begin{align}
  \max \{ 0, 2\alpha-1 \} \triangleq \beta^{\sharp} <\beta < \frac{\alpha}{2}.
  \label{eq:computationallimit}
    \end{align}
    Then there exists a sequence $\{(N_\ell,K_\ell,q_\ell)\}_{\ell \in \naturals}$ satisfying
    \begin{align*}
     \lim_{\ell \to \infty} \frac{\log (1/q_\ell)  }{ \log N_\ell} =2\beta , \quad  \lim_{\ell \to \infty}  \frac{\log K_\ell}{ \log N_\ell}= \alpha
    \end{align*}
    such that for any sequence of randomized polynomial-time tests $\phi_{\ell}: \{0,1\}^{\binom{N_\ell}{2} } \to \{0,1\}$ for the $\PDS(N_\ell,K_\ell,2q_\ell,q_\ell)$ problem, the Type-I+II error probability is lower bounded by
    \begin{align*}
\liminf_{\ell \to \infty} \Prob_0\{ \phi_\ell(G')=1\}+ \Prob_1 \{ \phi_\ell(G')=0 \} \geq 1,
\end{align*}
where $G' \sim \calG(N,q)$ under $H_0$ and $G' \sim \calG(N,K,p,q)$ under $H_1$.
\end{theorem}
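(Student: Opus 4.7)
The plan is to reduce $\PC(n_\ell, k_\ell, \gamma)$ to the target PDS problem via \prettyref{prop:reduction}, with parameters chosen so that (a) the PC instance remains in the conjectured hard regime $\log k_\ell / \log n_\ell < 1/2$, (b) the reduced PDS instance hits the target exponents $\alpha$ and $2\beta$, and (c) the total variation bound in \prettyref{eq:defxi} tends to zero. Any polynomial-time PDS tester $\phi_\ell$ would then, upon composition with the reduction, yield a polynomial-time PC tester $\psi_\ell$ whose Type I$+$II error matches that of $\phi_\ell$ up to $o(1)$, contradicting \prettyref{hyp:HypothesisPlantedClique}.

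The crux is the parameter calibration. Given $\alpha$ and $\beta^\sharp < \beta < \alpha/2$, I would first pick exponents $\gamma'' \in (0, \min\{\alpha, 1/2\})$ and $\gamma' = (1-\alpha)/(1-\gamma'')$, so that the targeting identity $\alpha = \gamma'\gamma'' + (1-\gamma')$ holds. The reduction's side condition $16 q_\ell \ell^2 \le 1$ and the concentration term $\sqrt{e^{72e^2 q_\ell \ell^2}-1}$ in \prettyref{eq:defxi} force $\gamma' > 1 - \beta$, equivalently $\gamma'' > (\alpha-\beta)/(1-\beta)$. A brief algebra check shows $(\alpha-\beta)/(1-\beta) < \min\{\alpha, 1/2\}$ precisely when $\beta > \beta^\sharp$, so the feasibility of such $(\gamma', \gamma'')$ is equivalent to the hypothesis on $\beta$. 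Next, set $n_\ell = \lfloor \ell^{\gamma'/(1-\gamma')} \rfloor$, $k_\ell = \lfloor n_\ell^{\gamma''} \rfloor$, $N_\ell = n_\ell \ell$, $K_\ell = k_\ell \ell$, and $q_\ell = N_\ell^{-2\beta}$, so that $\log K_\ell/\log N_\ell \to \alpha$ and $\log(1/q_\ell)/\log N_\ell \to 2\beta$ as required. Fix the planted-clique parameter $\gamma = 2^{-m_0}$ for a sufficiently large integer $m_0$ to be determined, so that the floor $\lfloor \log_2(1/\gamma) \rfloor$ in the reduction equals exactly $m_0$; this step relies on the strengthened hypothesis that \prettyref{hyp:HypothesisPlantedClique} holds for all $\gamma \in (0, 1/2]$.

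The next step is to check that each of the five terms in \prettyref{eq:defxi} vanishes along this sequence. The terms $e^{-K_\ell/12}$, $k_\ell e^{-\ell/18}$, and $\sqrt{k_\ell}\,e^{-\ell/36}$ decay super-polynomially because $\ell = N_\ell^{1-\gamma'}$ and $k_\ell, K_\ell$ grow only polynomially in $N_\ell$. The Gaussian-type term $\sqrt{e^{72 e^2 q_\ell \ell^2}-1}$ vanishes because $q_\ell \ell^2 \asymp N_\ell^{2(1-\gamma') - 2\beta} \to 0$ under $\gamma' > 1 - \beta$. The final term $k_\ell^2 (8 q_\ell \ell^2)^{m_0+1} \asymp N_\ell^{2\gamma'\gamma'' - (m_0+1)(2\beta - 2(1-\gamma'))}$ decays provided $m_0$ is chosen large enough that $(m_0+1)(2\beta - 2(1-\gamma')) > 2\gamma'\gamma''$, which is a finite constraint on $m_0$. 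Thus, under $H_0^{\rm C}$ the reduced graph $\tG$ is distributed exactly as $\Prob_0$, while under $H_1^{\rm C}$ it lies within vanishing total variation of $\Prob_1$. Given any polynomial-time $\phi_\ell$ with $\liminf_\ell [\Prob_0\{\phi_\ell = 1\} + \Prob_1\{\phi_\ell = 0\}] < 1$, the composed tester $\psi_\ell$ has Type I error matching exactly (null matched) and Type II error differing by at most the TV bound (alternative approximately matched). Since $\log k_\ell / \log n_\ell \to \gamma'' < 1/2$, this contradicts the \PC Hypothesis, proving the theorem.

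The main obstacle is the parameter bookkeeping. Three constraints---PC hardness $\gamma'' < 1/2$, reduction validity $\gamma' > 1 - \beta$, and the targeting identity $\alpha = \gamma'\gamma'' + 1 - \gamma'$---compete, and the feasible region in the $(\gamma', \gamma'')$-plane collapses precisely as $\beta \downarrow 2\alpha - 1$, which is what pins the computational boundary to $\beta = \beta^\sharp$ rather than anything higher. The TV-bound verification is then largely routine, with the freedom to choose $\gamma$ arbitrarily small (hence $m_0$ arbitrarily large) providing just enough slack to absorb the $(8 q_\ell \ell^2)^{m_0+1}$ discrepancy coming from the same-parent edge distribution in the reduction.
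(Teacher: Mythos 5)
Your proposal is correct and follows essentially the same route as the paper: reduce $\PC(n_\ell,k_\ell,\gamma)$ with $\gamma=2^{-m_0}$ to $\PDS(N_\ell,K_\ell,2q_\ell,q_\ell)$ via \prettyref{prop:reduction}, calibrate polynomial scalings so the exponents hit $\alpha$ and $2\beta$ while $\log k_\ell/\log n_\ell$ stays strictly below $1/2$, show the total variation bound \prettyref{eq:defxi} vanishes (taking $m_0$ large to kill the $k_\ell^2(8q_\ell\ell^2)^{m_0+1}$ term), and conclude by contradiction with \prettyref{hyp:HypothesisPlantedClique}; your $(\gamma',\gamma'')$ bookkeeping is an equivalent reparameterization of the paper's choice $\delta=1/(m_0\beta)$, $q_\ell=\ell^{-(2+\delta)}$. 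The only small omission is that you never check the hypothesis $k_\ell\ge 6e\ell$ of \prettyref{prop:reduction}, but it holds automatically under your constraints since $k_\ell/\ell \asymp N_\ell^{\alpha-2(1-\gamma')}\to\infty$ once $\gamma'>1-\beta$ and $\beta<\alpha/2$.
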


	\begin{proof}
	  Let $m_0=\lfloor \log_2 (1/\gamma) \rfloor$.  By \prettyref{eq:computationallimit}, there exist $0<\gamma \le 1/2$ and thus $m_0$ such that 
	     \begin{align}
    2\beta <\alpha <  \frac{1}{2} + \frac{m_0\beta+2}{2m_0\beta+1} \beta - \frac{1}{m_0 \beta}.   \label{eq:HardRegimeExpression}
    \end{align}
Fix $ \beta>0$ and $0<\alpha<1$ that satisfy \prettyref{eq:HardRegimeExpression}. 
Let $\delta=1/(m_0 \beta)$. Then it is straightforward to verify that 
$\frac{2+m_0\delta }{2+\delta} \beta \ge \frac{1}{2}-\delta + \frac{1+2\delta}{2+\delta} \beta $.
It follows from the assumption \prettyref{eq:HardRegimeExpression} that
    \begin{align}
    2\beta <\alpha< \min \left \{  \frac{2+m_0\delta }{2+\delta} \beta, \; \frac{1}{2}-\delta + \frac{1+2\delta}{2+\delta} \beta \right\}.
    \label{eq:betaub}   
    \end{align}
Let $\ell \in \naturals$ and $q_\ell=\ell^{-(2+\delta)}$.
Define
    \begin{align}
     n_\ell= \lfloor \ell^{\frac{2+\delta}{2\beta}-1 } \rfloor, \; k_\ell=\lfloor \ell^{\frac{(2+\delta)\alpha}{2\beta}-1 } \rfloor,\; N_\ell=n_\ell\ell, \; K_\ell=k_\ell\ell. \label{eq:DefNK}
    \end{align}
Then
\begin{align}
\lim_{\ell \to \infty} \frac{\log \frac{1}{q_\ell} }{ \log N_\ell} & = \frac{(2+\delta)}{(2+\delta)/(2\beta)-1 +1 } =2\beta, \nonumber \\
\lim_{\ell \to \infty}  \frac{\log K_\ell}{ \log N_\ell}  & = \frac{(2+\delta)\alpha/(2\beta)-1 +1 }{(2+\delta)/(2\beta)-1 +1} = \alpha. \label{eq:NKq}
\end{align}
Suppose that for the sake of contradiction there exists a small $\epsilon>0$ and a sequence of randomized polynomial-time tests $\{\phi_\ell\}$ for $\PDS(N_\ell,K_\ell,2q_\ell,q_\ell)$, such that
\begin{align*}
\Prob_0 \{ \phi_{N_\ell,K_\ell}(G') =1 \} + \Prob_1 \{ \phi_{N_\ell,K_\ell}(G')=0 \} \le 1-\epsilon
\end{align*}
holds for arbitrarily large $\ell$, where $G'$ is the graph in the $\PDS(N_\ell,K_\ell,2q_\ell,q_\ell)$.
Since $\alpha>2\beta$, we have $k_{\ell} \ge \ell^{1+\delta}$.
Therefore, $16 q_{\ell} \ell^2 \le 1$ and $k_{\ell} \ge 6e\ell$ for all sufficiently large $\ell$. Applying \prettyref{prop:reduction}, we conclude that $G \mapsto \phi(\tG)$ is a randomized polynomial-time test for $\PC(n_\ell,k_\ell, \gamma)$ whose Type-I+II error probability satisfies
\begin{align}
\Prob_{H_0^{\rm C}} \{  \phi_{\ell}( \tG ) =1\} + \Prob_{H_1^{\rm C} } \{\phi_\ell(\tG ) =0 \} \le 1- \epsilon+ \xi,
\label{eq:PlantedCliqueContradiction}
\end{align}
where $\xi$ is given by the right-hand side of \prettyref{eq:defxi}. By the definition of $q_\ell$, we have $q_\ell \ell^2 =\ell^{-\delta}$ and thus
\begin{align*}
k_\ell^2 (q_\ell\ell^2)^{m_0+1} \le \ell^{  (2+\delta)\alpha/\beta -2 - (m_0+1) \delta}  \le \ell^{-\delta},
\end{align*}
where the last inequality follows from \prettyref{eq:betaub}. Therefore $\xi\to0$ as $\ell\diverge$. Moreover, by the definition in \prettyref{eq:DefNK},
\begin{align*}
  \lim_{\ell \to \infty} \frac{\log k_\ell}{ \log n_\ell} = \frac{(2+\delta)\alpha/(2\beta)-1 }{(2+\delta)/(2\beta)-1}\le \frac{1}{2}- \delta,
\end{align*}
where the above inequality follows from \prettyref{eq:betaub}. Therefore, \prettyref{eq:PlantedCliqueContradiction} contradicts the assumption that \PC Hypothesis (\prettyref{hyp:HypothesisPlantedClique}) holds for $\gamma$.
\end{proof}

	\section{Discussions and open problems}
	\label{sec:discuss}
	
	Recent years have witnessed a great deal of progress on understanding the information-theoretical and
	computational limits of various statistical problems with planted structures. As outlined in this survey,
	various techniques are in place to identify the information-theoretic limits. In some cases, 
	polynomial-time procedures are shown to achieve the information-theoretic limits. However, in many
	other cases, it is believed that there exists a wide gap between the information-theoretic limits and
	the computational limits. For the planted clique problem,
	a recent exciting line of research has identified the performance limits of sum-of-squares hierarchy~\cite{Meka15,DeshpandeMontanari15,HKP15,RS15,barak-etal-planted-clique}. Under \PC Hypothesis, complexity-theoretic
	computational lower bounds have been derived for sparse PCA~\cite{berthet2013lowerSparsePCA}, 
	submatrix location~\cite{ma2013submatrix}, single community detection~\cite{HajekWuXu14}, and various other detection problems with
	planted structures~\cite{Brennan18}.
	 Despite these encouraging results, a variety of interesting questions remain open. Below we list a few representative problems.
	 Closing the observed computational gap, or equally importantly, disproving the possibility thereof on rigorous complexity-theoretic grounds, is an exciting new topic at the intersection of high-dimensional statistics, information theory, and computer science.

\paragraph{Computational lower bounds for recovering the planted dense subgraph} 
Closely related to the \PDS detection problem is the recovery problem, where given a graph generated from $\calG(N,K,p,q)$, the task is to recover the planted dense subgraph. Consider the asymptotic regime 
depicted in~\prettyref{fig:phase}. It has been shown in \cite{ChenXu14,ames2013robust} that exact recovery is 
information-theoretically possible if and only if $\beta<\alpha/2$ and can be achieved in polynomial-time if 
$\beta<\alpha -\frac{1}{2}$. Our computational lower bounds for the \PDS detection problem imply that 
the planted dense subgraph is hard to approximate to any constant factor if $ \max(0,2\alpha-1) <\beta<\alpha/2$ (the red regime in \prettyref{fig:phase}). Whether the planted dense subgraph is hard to approximate with any constant factor in the regime 
of $ \alpha -\frac{1}{2} \le \beta \le \min\{ 2\alpha-1, \alpha/2\}$ is an interesting open problem.
For the Gaussian case, \cite{CLR15} showed that exact recovery is computationally hard $\beta>\alpha -\frac{1}{2}$ by assuming 
a variant of the standard \PC hypothesis (see \cite[p.~1425]{CLR15}).

Finally, we note that to prove our computational lower bounds for the planted dense subgraph detection problem in \prettyref{thm:pds}, 
we have assumed the \PC detection problem is hard for any constant $\gamma>0$. 
An important open problem is to show by means of reduction that 
if \PC detection problem is hard with $\gamma=0.5$, then it is also hard with $\gamma=0.49$.

\paragraph{Computational lower bounds within the Sum-of-Squares Hierarchy} 
For the single community model, 
\cite{HajekWuXu_one_sdp15} obtained a tight characterization of the performance limits of SDP relaxations, corresponding to the sum-of-squares hierarchy with degree $2$. In particular, (1) if $K = \omega(n/ \log n)$, SDP attains the information-theoretic threshold with sharp constants; (2) If $K = \Theta(n/ \log n)$, SDP is suboptimal by a constant factor; (3) If $K = o(n/ \log n)$ and $K \to \infty$, SDP is order-wise suboptimal.
An interesting future direction to generalize this result to the sum-of-squares hierarchy, showing that sum-of-squares  with any constant degree are sub-optimal when $K=o(n\log n)$.

Furthermore, if $K \geq \frac{n}{\log n} (1/(8e) + o(1))$ for the Gaussian case and $K \geq \frac{n}{\log n} (\rho_{\sf BP}(p/q) + o(1))$
for the Bernoulli case,
exact recovery can be attained in nearly linear time via message passing plus clean up~\cite{HajekWuXu_one_beyond_spectral15,HajekWuXu_MP_submat15} whenever information-theoretically possible. An interesting question is whether exact recovery
beyond the aforementioned two limits is possible in polynomial-time.


\paragraph{Recovering multiple communities}
Consider the stochastic block model under which $n$ vertices are partitioned into $k$ equal-sized communities,
and two vertices are connected by an edge with probability $p$ if they are from the same community and
$q$ otherwise.

First let us focus on correlated recovery in the sparse regime where $p=a/n$ and $q=b/n$ for two fixed constants $a>b$ 
in the assortative case. 
For $k=2$, it has been shown~\cite{Mossel12,Massoulie13,Mossel13} that the information-theoretic and computational thresholds 
coincide at $(a-b)^2=2(a+b)$. Based on statistical physics heuristics, it is further conjectured that the information-theoretic and computational thresholds
continue to coincide for $k=3,4$, but depart from each other for $k\ge 5$; however, a rigorous proof remains open.

Next let us turn to exact recovery in the relatively sparse regime where $p=a \log n/n$ and $q=b \log n/n$ for two fixed
constants $a>b$. For $k=\Theta(1)$, it has been shown that the semidefinite programming (SDP) relaxations achieve the information-theoretic limits $\sqrt{a}-\sqrt{b}>\sqrt{k}$. Furthermore, it is shown that SDP continues to be optimal for $k=o(\log n)$, but cease to be optimal
for $k=\Theta(\log n)$. It is conjectured in~\cite{ChenXu14} that no polynomial-time procedure can be optimal for $k=\Theta(\log n)$.

\paragraph{Estimating graphons}
Graphon is a powerful network model for studying large networks \cite{Lovasz12}.
Concretely, given 
$n$ vertices, the edges are generated
independently, connecting each pair of two distinct
vertices $i$ and $j$ with a probability $ M_{ij} = f (x_i, x_j)$,
where $x_i \in [0,1]$ is the latent feature vector of vertex $i$ that
captures various characteristics of vertex $i$; $f: [0,1] \times [0,1] 
\to [0,1]$ is a symmetric function called graphon. 
The problem of interest is 
to estimate  either the edge probability matrix $M$ or the graphon $f$ on the basis of the observed graph. 
\begin{itemize}
	\item When $f$ is a step function which corresponds to the stochastic block model
with $k$ blocks for some $k$, the minimax optimal estimation error rate is 
shown to be on the order of $k^2/n^2 + \log k/n$~\cite{gao2015rate}, while
the currently best error rate achievable in polynomial-time is $k/n$~\cite{klopp2017optimal}.
\item When $f$ belongs to H\"{o}lder or Sobolev space with smoothness
 index $\alpha$, the minimax optimal rate is shown to be $n^{-2\alpha/(\alpha+1)}$ for $\alpha<1$ 
 and $\log n/n$ for $\alpha>1$~\cite{gao2015rate},
while 
 the best error rate achievable in polynomial-time that is known in the literature is
 $n^{-2\alpha/(2\alpha+1)}$~\cite{Xugraphon17}. 
\end{itemize}
For both cases, it remains open whether the minimax optimal rate can be achieved in polynomial-time.

\paragraph{Sparse PCA} 

Consider the following \emph{spiked Wigner} model, where the underlying signal is a rank-one matrix:
\begin{align} \label{eq:sparse_pca_def}
	X = \frac{\lambda}{\sqrt n} vv^\top + W \, ,
\end{align}
Here, $v \in \reals^n$, $\lambda > 0$ and $W \in \reals^{n \times n}$ is a Wigner random matrix with $W_{ii} \iiddistr \calN(0, 2)$ 
and $W_{ij}=W_{ij} \iiddistr \calN(0, 1)$ for $i<j$.  We assume for some $\gamma \in [0,1]$ the support of $v$ is drawn uniformly from all ${n \choose \gamma n}$ subsets $S \subset [n]$ with $|S| = \gamma n$.  Once the support is chosen, each nonzero component $v_i$ is drawn independently and uniformly from $\{\pm \gamma^{-1/2} \}$, so that 
$\|v\|_2^2 = n$.   When $\gamma$ is small, the data matrix $X$ is a sparse, rank-one matrix contaminated by Gaussian noise. 
For detection, we also consider a null model of $\lambda=0$ where $X=W$.

One natural approach for this problem is PCA: that is, diagonalize $X$ and use its leading eigenvector $\hat{v}$ as an estimate of $v$. 
Using the theory of random matrices with rank-one perturbations~\cite{baik2005phase,peche2006largest,benaych2011eigenvalues},
both detection and correlated recovery of $v$ is possible if and only if $\lambda > 1$. Intuitively, PCA only exploits the low-rank structure of the underlying signal, and not the sparsity of $v$; it is natural to ask whether one can succeed in detection or reconstruction for some $\lambda < 1$ by taking advantage of this additional structure. 
Through analysis of an approximate message-passing algorithm and the free energy, it is conjectured~\cite{lesieur2015phase,KrzakalaXuZdeborova16} that there exists a critical sparsity threshold $\gamma^\ast \in (0,1)$ such that if $\gamma \ge \gamma^\ast$, then  both the information-theoretic and computational thresholds are given by $\lambda = 1$; if 
$\gamma < \gamma^\ast$, then the computational threshold is given by $\lambda = 1$, but the information-theoretic threshold for $\lambda$ is strictly smaller.  A recent series of paper has identified the sharp information-theoretic threshold for correlated recovery through the Guerra interpolation technique and cavity method~\cite{KrzakalaXuZdeborova16,Barbier16,LelargeMiolane16,alaoui2018estimation}.
Also, the sharp information-theoretic threshold for detection has been recently determined  in~\cite{alaoui2017finite}. 
However, there is no rigorous evidence  
justifying that $\lambda=1$ is the computational threshold.

\paragraph{Tensor PCA}
We can also consider a planted tensor model, in which we observe an order-$k$ tensor
\begin{align}
	X = \lambda  v^{\otimes k}  + W  
	\label{eq:planted_tensor_model}
\end{align}
where $v$ is uniformly distributed over the unit sphere in $\reals^n$ 
and $W \in (\reals^{n})^{\otimes k}$ is a totally symmetric noise tensor with Gaussian entries $\calN(0,1/n)$ (see \cite[Section 3.1]{MontanariReichmanZeitouni14} for a precise definition).  
This model is known as the \emph{$p$-spin model} in statistical physics, and is widely used in machine learning and data analysis to model high-order correlations in a dataset. A natural approach is tensor PCA, which coincides with the maximum likelihood estimator: $\min_{\|u\|_2 = 1} \iprod{X}{u^{\otimes k}}$.
When $k=2$, this reduces to standard PCA which can be efficiently computed by singular value decomposition; however, as soon as $k \ge 3$, tensor PCA becomes NP-hard in the worst case~\cite{Hillar2013}. 

Previous work~\cite{MontanariRichard14,MontanariReichmanZeitouni14,PerryWeinBandeira16}
shows that tensor PCA achieves consistent estimation of $v$ if $\lambda \gtrsim \sqrt{k \log k}$, while these are information-theoretically impossible if $\lambda \lesssim \sqrt{k \log k}$.  
The exact location of the information-theoretic threshold for any $k$ was determined recently in~\cite{LMLKZ17}, but all known polynomial-time algorithms fail far from this threshold.  A ``tensor unfolding'' algorithm is shown in~\cite{MontanariRichard14} to succeed if $\lambda \gtrsim n^{(\lceil k/2 \rceil -1 )/2}$.  In the special case $k=3$, it is further shown in~\cite{hopkins2015tensor} that a degree-$4$ sum-of-squares relaxation succeeds if $\lambda = \omega(n\log n)^{1/4}$ and fails if $\lambda=O(n/\log n)^{1/4}$. More recent work~\cite{zhang2017tensor} shows that a spectral method achieves consistent estimation provided that 
$\lambda=\Omega(n^{1/4})$, improving the positive result in \cite{hopkins2015tensor}  by a poly-logarithmic factor. 
It remains open whether any polynomial-time algorithm succeeds in the regime of $1 \lesssim \lambda \lesssim n^{1/4}$. 
Under a hypergraph version of the planted clique detection hypothesis, it is shown in ~\cite{zhang2017tensor} that  
 no polynomial-time algorithm can succeed when $\lambda \le n^{1/4-\epsilon}$ for an arbitrarily small constant $\epsilon>0$. 
It remains open whether the usual planted clique problem can be reduced to the hypergraph version.


\paragraph*{Gaussian mixture clustering}
Consider the following model of clustering in high dimensions. 
Let $v_1,...,v_k$ be independently and identically distributed as $\calN \left(0, k/(k-1) \, \identity_{n} \right)$,
 and define $\overline v = (1/k)\sum_{s} v_s$ to be their mean. The scaling of the expected norm of each $v_s$ with $k$ ensures that $\Exp \|v_s - \overline{v} \|_2^2 =n$ for all $1 \le s \le k$. For a fixed parameter $\alpha>0$, we then generate $m = \alpha n$ points $x_i \in \reals^n$ which are partitioned into $k$ clusters of equal size by a balanced partition $\sigma:[n]\to[k]$, again chosen uniformly at random from all such partitions. For each data point $i$, let $\sigma_i \in [k]$ denote its cluster index, and generate $x_i$ independently according to Gaussian distribution with mean $\sqrt{\rho/n}\, (v_{\sigma_i} - \overline v)$ and identity covariance matrix, where  $\rho>0$ is a fixed parameter characterizing the separation between clusters. 
Equivalently, this model can be described by in the following matrix form:
\begin{align} \label{eq:clustering_def}
	X = \sqrt{\frac{\rho}{ n}} \left(S - \frac{1}{k}\allones_{m,k}\right) V^\top + W,
\end{align}
where $X=[x_1,\ldots,x_m]^\top$, $V = [v_1, ..., v_k ]$, $S$ is $m \times k$ with $S_{i,t} = \indicator{\sigma_i = t}$, 
$\allones_{m,k}$ is the $m\times k$ all-one matrix,  and $W_{i,j} \iiddistr \calN(0,1)$.
  In the null model, there is no cluster structure and $X = W$. 
The subtraction of $\allones_{m,k}/k$ centers the signal matrix so that $\Exp X = 0$ in both models. 
It follows from the celebrated BBP phase transition~\cite{baik2005phase,Paul07} that detection
and correlated recovery using spectral methods is possible if and only if $\rho \sqrt{\alpha} >(k-1) $. 
In contrast, detection and correlated recovery are shown to be information-theoretically 
possible if $\rho > 2\sqrt{ \frac{k \log k}{\alpha} } + 2\log k $. The sharp characterization of
information-theoretic limit is still open and it is conjectured~\cite{clustering} that the computational threshold
coincides with the spectral detection threshold.





\begin{appendices}

\section{Mutual information-characterization of correlated recovery}
\label{app:MIcorr}

\newcommand{\Unif}{\mathrm{Unif}}

We consider a general setup:  Let the number of communities $k$ be a constant. Denote the membership vector by $\sigma=(\sigma_1,\ldots,\sigma_n) \in [k]^n$ and the observation is $A=(A_{ij}: 1 \leq i < j \leq n)$. Assume the following conditions:
\begin{enumerate}[label=A\arabic*]
	\item \label{A1}
	For any permutation $\pi\in S_k$, $(\sigma,A)$ and $(\pi(\sigma),A)$ are equal in law, where $\pi(\sigma)\triangleq (\pi(\sigma_1),\ldots,\pi(\sigma_n))$; 
	
	\item \label{A2}
	For any $i \neq j \in [n]$, $I(\sigma_i,\sigma_j;A)= I(\sigma_1,\sigma_2;A)$;
	
	\item \label{A3}
	For any $z_1,z_2 \in [k]$, $\prob{\sigma_1=z_1,\sigma_2=z_2} = \frac{1}{k^2} + o(1)$ as $n\to \infty$.
\end{enumerate}
These assumptions are satisfied for example for $k$-community SBM (where each pair of vertices $i$ and $j$ are connected independently with probability $p$ if $\sigma_i=\sigma_j$ and $q$ otherwise),\index{Stochastic block model (SBM)! $k$ communities}
 and the membership vector
$\sigma$ can be either uniformly distributed on $[k]^n$ or the set of equal-sized $k$-partition of $[n]$. 

Recall that correlated recovery entails the following:
For any $\sigma , \hat{\sigma} \in [k]^n$, define the overlap:
\begin{align}
o\left( \sigma, \hat{\sigma} \right) = \frac{1}{n} \max_{\pi \in S_k} 
\sum_{i \in [n] } \left( \indc{ \pi\left(\sigma_i \right) = \hat{\sigma}_i} - \frac{1}{k} \right).
\end{align}
We say an estimator $\hat{\sigma} = \hat{\sigma}(A)$ achieves correlated recovery if\footnote{For the special case of $k=2$, \prettyref{eq:corro} is equivalent to 
$\frac{1}{n}\Expect[|\Iprod{\sigma}{\hat \sigma}|] = \Omega(1)$, where $\sigma , \hat{\sigma}$ are assumed to be $\{\pm\}^n$-valued.}
\begin{equation}
\expect{o\left( \sigma, \hat{\sigma} \right)}=\Omega(1),
\label{eq:corro}
\end{equation}
 that is, the misclassification rate, up to a global permutation, outperforms random guessing.
Under the above three assumptions, we have the following characterization of correlated recovery:
\begin{lemma}
\label{lmm:MIcorr}	
Correlated recovery is possible if and only if 
$I(\sigma_1, \sigma_2 ; A) = \Omega(1)$.	
\end{lemma}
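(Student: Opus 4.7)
The plan rests on a symmetry-driven reduction. By A1, for every $\pi \in S_k$ the joint law of $(\sigma_1,\sigma_2,A)$ is invariant under $(\sigma_1,\sigma_2)\mapsto(\pi(\sigma_1),\pi(\sigma_2))$, so (applied over a countable collection of $\pi$) the regular conditional distribution $\Prob(\sigma_1,\sigma_2\mid A)$ is almost surely $S_k$-invariant and depends on $A$ only through the scalar $P_{12}(A)\triangleq\Prob(\sigma_1=\sigma_2\mid A)$. A direct entropy computation then yields the identity $I(\sigma_1,\sigma_2;A)=I(\indc{\sigma_1=\sigma_2};A)$, and A3 fixes the indicator's marginal at $\tfrac{1}{k}+o(1)$. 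This reduces the lemma to studying when $P_{12}(A)$ concentrates at $\tfrac{1}{k}$ versus deviates by $\Omega(1)$.

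For the \emph{necessity} direction (recovery $\Rightarrow I(\sigma_1,\sigma_2;A)=\Omega(1)$) I prove the contrapositive. Suppose $I(\sigma_1,\sigma_2;A)=o(1)$. By A2 and Pinsker,
\begin{equation*}
\expect{\TV\big(\Prob(\sigma_i,\sigma_j\mid A),\,\Prob(\sigma_i,\sigma_j)\big)}=o(1)
\end{equation*}
\emph{uniformly} in $i\neq j$, and this lifts to asymptotic independence of $(\sigma_i,\sigma_j)$ from $(\hat\sigma_i,\hat\sigma_j)=f(A)$ for any (possibly randomized) estimator. Fix $\pi\in S_k$ and set $Z_\pi\triangleq\tfrac{1}{n}\sum_i\indc{\pi(\sigma_i)=\hat\sigma_i}$. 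The marginal uniformity $\Prob(\sigma_i=z)=\tfrac{1}{k}$ (a further consequence of A1) gives $\expect{Z_\pi}=\tfrac{1}{k}+o(1)$, and the uniform pairwise decoupling gives $\var(Z_\pi)=o(1)$ (each of the $n^2$ off-diagonal covariances is $o(1)$ uniformly, and the $n$ diagonal terms contribute $O(1/n)$). Because $|S_k|=k!$ is a constant, a Cauchy--Schwarz/union bound yields $\expect{\max_\pi Z_\pi}\le \max_\pi\expect{Z_\pi}+k!\max_\pi\sqrt{\var(Z_\pi)}=\tfrac{1}{k}+o(1)$, hence $\expect{o(\sigma,\hat\sigma)}=o(1)$.

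For the \emph{sufficiency} direction I construct an estimator from the pairwise posteriors $P_{ij}\triangleq\Prob(\sigma_i=\sigma_j\mid A)$. The reduction above combined with reverse Pinsker for bounded-support Bernoulli divergence gives $\expect{(P_{12}-\tfrac{1}{k})^2}=\Omega(1)$, and by the tower property
\begin{equation*}
\expect{\big(\indc{\sigma_i=\sigma_j}-\tfrac{1}{k}\big)\big(P_{ij}-\tfrac{1}{k}\big)} \;=\; \expect{\big(P_{ij}-\tfrac{1}{k}\big)^2} \;=\; \Omega(1),
\end{equation*}
uniformly over $i\neq j$. My proposed estimator picks a uniform pivot $v^\ast\in[n]$, assigns $\hat\sigma_{v^\ast}$ uniformly in $[k]$, and for each $i\neq v^\ast$ sets $\hat\sigma_i=\hat\sigma_{v^\ast}$ iff $P_{i,v^\ast}>\tfrac{1}{k}$, otherwise drawing $\hat\sigma_i$ uniformly from $[k]\setminus\{\hat\sigma_{v^\ast}\}$. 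Lower-bounding $\expect{o(\sigma,\hat\sigma)}$ then reduces to invoking the moment identity above, averaged over $v^\ast$, after a symmetrization that uses A1 to eliminate the global label ambiguity.

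The main technical obstacle is the sufficiency step: a random pivot can be atypical, and even for a typical pivot the thresholding step only aligns same-community vertices in expectation, not vertex-by-vertex. One must either show that a positive fraction of pivots are ``good'' (using A1 to symmetrize over $v^\ast$) or replace the pivot-thresholding by a more robust rounding of the matrix $(P_{ij})$, controlling third-order correlations of $\sigma$ along the way. The necessity direction is by contrast mostly bookkeeping once the $S_k$-symmetry reduction and the uniform Pinsker bound are in place.
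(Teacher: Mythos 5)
Your symmetry reduction to the indicator $\indc{\sigma_1=\sigma_2}$ and your necessity direction are sound and essentially identical to the paper's argument: pairwise mutual information $o(1)$, combined with data processing and Pinsker, gives asymptotic independence of $(\sigma_i,\sigma_j)$ from $(\hat\sigma_i,\hat\sigma_j)$ uniformly over pairs, and a second-moment bound on $\frac1n\sum_i\bigl(\indc{\pi(\sigma_i)=\hat\sigma_i}-\frac1k\bigr)$ for each of the $k!$ permutations finishes it (the paper does the same bookkeeping via Cauchy--Schwarz on the squared sum rather than mean plus variance).

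The genuine gap is in the sufficiency direction, and you flag it yourself: you never complete the lower bound on $\Expect[o(\sigma,\hat\sigma)]$ for the pivot-thresholding estimator, and the remedies you propose (showing a positive fraction of pivots are ``good'', or a more robust rounding of $(P_{ij})$ with control of third-order correlations of $\sigma$) are both not carried out and not needed. The missing observation is that the overlap is an average of per-vertex indicators, so linearity of expectation reduces everything to one binary-testing guarantee per vertex; no vertex-by-vertex alignment, no higher-order correlations, and no random pivot are required (by A2, vertex $1$ serves as the pivot deterministically). Concretely: $I(\indc{\sigma_1=\sigma_j};A)=\Omega(1)$ together with $\Prob(\sigma_1=\sigma_j)=\frac1k+o(1)$ yields $\TV\bigl(P_{A\mid\sigma_1=\sigma_j},P_{A\mid\sigma_1\neq\sigma_j}\bigr)=\Omega(1)$ uniformly in $j$ (your $L^2$ bound gives the same, since $\Expect[(P_{1j}-\tfrac1k)^2]\le\Expect\bigl|P_{1j}-\tfrac1k\bigr|$ and thresholding the posterior at the prior is exactly the test achieving the total variation). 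Hence there is a test $\hat x_j$ with $\Prob(\hat x_j=1\mid\sigma_j=\sigma_1)+\Prob(\hat x_j=0\mid\sigma_j\neq\sigma_1)\ge 1+\Omega(1)$. Setting $\hat\sigma_1=1$, $\hat\sigma_j=1$ iff $\hat x_j=1$ and uniform on $\{2,\ldots,k\}$ otherwise, and choosing in the overlap the permutation $\pi$ with $\pi(\sigma_1)=1$, one gets for every $j$ that $\Prob(\pi(\sigma_j)=\hat\sigma_j)\ge\frac1k\bigl[\Prob(\hat x_j=1\mid\sigma_j=\sigma_1)+\Prob(\hat x_j=0\mid\sigma_j\neq\sigma_1)\bigr]+o(1)\ge\frac1k+\Omega(1)$, where the factor $\frac{1}{k-1}$ from the random assignment is precisely what makes the two conditional success probabilities add with equal weight $\frac1k$. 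Summing over $j$ gives $\Expect[o(\sigma,\hat\sigma)]=\Omega(1)$. This short computation is the paper's proof of the ``if'' part; without it (or an equivalent), your sufficiency direction remains an unproved sketch, even though your construction of the estimator is the right one.
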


\begin{proof}
We start by recalling the relation between mutual information and total variation.
For any pair of random variables $(X,Y)$, define the so-called $T$-information \cite{Csiszar96}:
$T(X;Y) \triangleq \TV(P_{XY}, P_XP_Y) = \Expect[\TV(P_{Y|X}, P_Y)]$.
For $X \sim \Bern(p)$, this simply reduces to 
\begin{equation}
T(X;Y) = 2p(1-p) \TV(P_{Y|X=0}, P_{Y|X=1}).
\label{eq:Tbern}
\end{equation}
Furthermore, the mutual information can be bounded by the $T$-information, 
by Pinsker's and Fano's inequality, as follows \cite[Eq.~(84) and Prop.~12]{PW14a}
\begin{equation}
 2 T(X;Y)^2 \leq I(X;Y)  \leq \log (M-1) T(X;Y) + h(T(X;Y))		
	\label{eq:TI}
\end{equation}
where in the upper bound $M$ is the number of possible values of $X$, and $h$ is the binary entropy function in \prettyref{eq:binaryentropy}.

We prove the ``if'' part.
Suppose 
$I(\sigma_1, \sigma_2; A) = \Omega(1)$.
We first claim that assumption \ref{A1} implies that 
\begin{equation}
I (\indc{\sigma_1 =\sigma_2}; A)=I(\sigma_1, \sigma_2; A)
\label{eq:ss}
\end{equation}
that is,
 $A$ is independent of $\sigma_1,\sigma_2$ conditional on $\indc{\sigma_1 =\sigma_2}$. 
Indeed, 
for any $z \neq z'\in [k]$, let $\pi$ be any permutation such that 
$\pi(z')=z.$
Since $P_{\sigma, A} =  P_{\pi(\sigma), A}$, 
we have $P_{A|\sigma_1=z,  \sigma_2=z} =  P_{A| \pi(\sigma_1)=z,  \pi(\sigma_2)=z }$, i.e., 
$P_{A |\sigma_1=z, \sigma_2=z} =  P_{A| \sigma_1 =z',  \sigma_2 =z' }$. 
Similarly, one can show that 
$P_{A|\sigma_1=z_1, \sigma_2=z_2} =  P_{A| \sigma_1 =z_1',  \sigma_2 =z_2'}$, 
for any $z_1 \neq z_2$ and $z_1'\neq z_2'$, and this proves the claim.

Let $x_j=\indc{\sigma_1 =\sigma_j}$.
By the symmetry assumption \ref{A2}, 
$I(x_j; A) = I(x_2; A) = \Omega(1)$ for all $j \neq 1$.
Since $\prob{x_j = 1} = \frac{1}{k} + o(1)$ by assumption \ref{A3}, applying \prettyref{eq:TI} with $M=2$ and in view of \prettyref{eq:Tbern}, we have
$\TV(P_{A|x_j=0},P_{A|x_j=1})=\Omega(1)$.
Thus, there exists an estimator $\widehat{x}_j \in \{0,1\}$ as a function of $A$, such that
\begin{align}
\prob{\widehat{x}_j = 1 \mid x_j =1 } + 
\prob{\widehat{x}_j = 0 \mid x_j =0 } \ge 1+\TV(P_{A|x_j=0},P_{A|x_j=1})
= 1+ \Omega(1).  \label{eq:estimator_x_hat}
\end{align}

Define $\hat\sigma$ as follows: set $\hat\sigma_1= 1 $; for $j \neq 1$, set $\hat \sigma_j = 1 $ if $\widehat{x}_j = 1$ 
and draw $\hat \sigma_j $ from $\{2,\ldots,k\}$ uniformly at random 
if $\widehat{x}_j = 0$.
Next, we show that $\hat\sigma$ achieves correlated recovery. 
Indeed, fix a permutation $\pi \in S_k$ such that $\pi(\sigma_1)=1$. It follows from 
the definition of overlap that
\begin{equation}
\Expect[o\left(\sigma, \hat{\sigma} \right)]
\ge \frac{1}{n} \sum_{j \neq 2} \prob{\pi(\sigma_j) =\hat\sigma_j} - \frac{1}{k}.
\label{eq:olb}
\end{equation}
Furthermore, since $\pi(\sigma_1)=1$, we have, for any $j\neq 1$,
\[
\prob{\pi(\sigma_j) = \hat\sigma_j,x_j=1}=
\prob{\hat x_j=1,x_j=1}
\]
and
\[
\prob{\pi(\sigma_j) = \hat\sigma_j,x_j=0}=
\prob{\pi(\sigma_j) = \hat\sigma_j, \hat x_j=0,x_j=0}=
\frac{1}{k-1} \prob{\hat x_j=0,x_j=0},
\]
where the last step is because conditional on $\hat{x}_j=0$,
$\hat{\sigma}_j$ is chosen  from $\{2,\ldots,k\}$ uniformly and 
independently of everything else.
Since $\prob{x_j = 1} = \frac{1}{k} + o(1)$, we have
\[
\prob{\pi(\sigma_j) =\hat\sigma_j} = \frac{1}{k}(\prob{\widehat{x}_j = 1 \mid x_j =1 } + 
\prob{\widehat{x}_j = 0 \mid x_j =0 }) +o(1) \overset{\prettyref{eq:estimator_x_hat}}{\ge} \frac{1}{k}+ \Omega(1).  
\]
By \prettyref{eq:olb}, we conclude that $\hat{\sigma}$ achieves correlated recovery
of $\sigma$.

Next we prove the ``only if'' part.
Suppose $I(\sigma_1, \sigma_2; A) = o(1)$ and we aim to show 
$\expect{o\left(\sigma, \hat{\sigma} \right)}=o(1)$ for any estimator $\hat\sigma$.
By the definition of overlap, we have
\begin{align*}
o\left(\sigma, \hat{\sigma} \right) 
\le 
\frac{1}{n} \sum_{\pi \in S_k}
\left| \sum_{i \in [n] }  
\left( \indc{ \pi\left(\sigma_i \right) = \hat{\sigma}_i} - \frac{1}{k}  \right) \right|.
\end{align*}
Since there are $k!=\Omega(1)$ permutations in $S_k$, it suffices to show
for any fixed permutation $\pi$,
$$
\expect{ \left| \sum_{i \in [n] } \left( \indc{ \pi\left(\sigma_i \right) = \hat{\sigma}_i} - \frac{1}{k}  \right) 
\right| } = o(n).
$$
Since $I(\pi(\sigma_i), \pi(\sigma_j); A)=I(\sigma_i, \sigma_j; A)$, without loss of generality, 
we assume $\pi=\text{id}$ in the following. By the Cauchy-Schwarz inequality, it further suffices to show
\begin{equation}
\expect{ \left( \sum_{i \in [n] } \left( \indc{ \sigma_i =\hat{\sigma}_i } - \frac{1}{k} \right)\right)^2 } =o(n^2).
\label{eq:overlap2}
\end{equation}
Note that 
\begin{align*}
& \expect{  \left( \sum_{i \in [n] } \left(
   \indc{ \sigma_i  = \hat{\sigma}_i } - \frac{1}{k} \right)   \right)^2 } \\
 & = \sum_{i, j \in [n] } 
 \expect{ \left( \indc{ \sigma_i = \hat{\sigma}_i }- \frac{1}{k}  \right) 
 \left( \indc{ \sigma_j  = \hat{\sigma}_j }- \frac{1}{k}  \right) }  \\
 & =  \sum_{i, j \in [n] }  \prob{  \sigma_i = \hat{\sigma}_i , \sigma_j  = \hat{\sigma}_j }
 - \frac{2n}{k} \sum_{i \in [n] }\prob{ \sigma_i = \hat{\sigma}_i } + \frac{n^2}{k^2}.
\end{align*}
For the first term in the last displayed equation, 
let $\sigma'$ be identically distributed as $\hat\sigma$ but independent of $\sigma$.
Since $I(\sigma_i,\sigma_j;\hat\sigma_i,\hat\sigma_j) \le I(\sigma_i,\sigma_j;A)=o(1)$ by the data processing inequality, it follows from the lower bound in 
\prettyref{eq:TI} that $\TV(P_{\sigma_i,\sigma_j,\hat\sigma_i,\hat\sigma_j}, P_{\sigma_i,\sigma_j,\sigma_i',\sigma_j'})=o(1)$.
Since 
$\pprob{  \sigma_i = {\sigma}_i', \sigma_j  = {\sigma}_j' } \leq 
\max_{a,b \in [k]} \prob{  \sigma_i = a, \sigma_j  = b} \leq \frac{1}{k^2}+o(1)$ by assumption \ref{A3}, 
we have
$$
\prob{  \sigma_i = \hat{\sigma}_i , \sigma_j  = \hat{\sigma}_j }
\leq \frac{1}{k^2} + o(1),
$$
Similarly, for the second term, we have
$$
\prob{ \sigma_i = \hat{\sigma}_i } = \frac{1}{k} +o(1),
$$
where the last equality holds due to $I(\sigma_i; A) =o(1).$
Combining the last three displayed equations gives \prettyref{eq:overlap2} and completes the proof.
\end{proof}

\section{Proof of \prettyref{eq:second_moment_conditional} $\implies$ \prettyref{eq:MIcorr2} and 
verification of  \prettyref{eq:second_moment_conditional} in the binary symmetric SBM}
\label{app:MITV}
Combining \prettyref{eq:ss} with 
\prettyref{eq:TI} and \prettyref{eq:Tbern}, we have
$I(\sigma_1,\sigma_2; A) = o(1)$ if and only if $\TV(\P_+,\P_-) =o(1)$,
where $\P_+=P_{A|\sigma_1=\sigma_2}$ and $\P_-=P_{A|\sigma_1\neq\sigma_2}$.
Note the following characterization about the total variation distance, which simply follows from the Cauchy-Schwartz inequality:
\begin{equation}
\TV(\P_+,\P_-) = \frac{1}{2} \sqrt{\inf_{\Q}  \int  \frac{  \left( \P_{+} - \P_{-} \right)^2 }{\Q }}
\label{eq:TVquadratic}
\end{equation}
where the infimum is taken over all probability distributions $\Q$. 
Therefore \prettyref{eq:second_moment_conditional} implies \prettyref{eq:MIcorr2}.

%

Finally, we consider the binary symmetric SBM and show that,
below the correlated recovery threshold $\tau=\frac{(a-b)^2}{2(a+b)}<1$, 
 \prettyref{eq:second_moment_conditional} is satisfied if the reference distribution $\Q$ is the distribution of $A$ in
the null (\ER) model. Note that 
$$
\int  \frac{  \left( \P_{+} - \P_{-} \right)^2 }{\Q }  =
\int \frac{\P^2_{+}}{\Q} +\int \frac{\P^2_{-}}{\Q} - 2 \int \frac{ \P_{+} \P_{-} }{ \Q}.
$$
Hence, it is sufficient to show
$$
 \int \frac{ \P_{z} \P_{\tilde{z} } }{ \Q} =  
C+o(1), \quad  \forall z, \tilde{z} \in \{\pm \}
$$
for some constant $C$ independent of $z$ and $\tz$.
Specifically, following the derivations in \prettyref{eq:SBM_second_moment_eq},
we have
\begin{align}
\int \frac{   \P_{z}  \P_{\tz} }{ \Q } 
&= \Expect \qth{  \prod_{i < j} \left(1 +  \sigma_i \sigma_j \tsigma_i \tsigma_j \rho \right)
\mathrel{\bigg|} \sigma_1 \sigma_2=z, \tsigma_1 \tsigma_2 =\tz }  \nonumber \\
& =  \left( 1+o(1) \right) e^{ -\tau^2/4 -\tau/2} \times 
\Expect \qth{ \exp \left( \frac{\rho}{2} \iprod{ \sigma}{\tsigma}^2  \right) \mathrel{\Big|} \sigma_1 \sigma_2=z, \tsigma_1 \tsigma_2 =\tz },
\end{align}
where the last equality holds $\rho=\tau/n + O(1/n^2)$ and $\log(1+x) = x -x^2/2 +O(x^3)$. 

Write $\sigma=2\xi-1$ for $\xi\in\{0,1\}^n$ and let 
$$
H_1\triangleq \xi_1 \tilde{\xi}_1 + \xi_2 \tilde{\xi}_2 
\quad \text{ and } \quad 
H_2 \triangleq \sum_{j \ge 3}^n \xi_j \tilde{\xi}_j.
$$ 
Then $\iprod{\sigma}{\tsigma} = 4 (H_1+H_2) -n$.
Moreover, conditional  on $\sigma_1, \sigma_2$ and $\tsigma_1, \tsigma_2$, 
$$
H_2 \sim \text{Hypergeometric} \left( n-2, n/2 - \xi_1-\xi_2, n/2 - \tilde{\xi}_1-\tilde{\xi}_2 \right).
$$
Since $|H_1| \le 2$, $\xi_1+\xi_2 \le 2 $, and $\tilde{\xi}_1+\tilde{\xi}_2 \le 2$, 
it follows that 
conditional on $\sigma_1 \sigma_2=z, \tsigma_1 \tsigma_2 =\tz $,
$ \frac{1}{\sqrt{n}} ( 4H_1 + 4H_2 - n )$ converges to $\calN(0,1)$ in distribution 
as $n \to \infty$ by the central limit theorem for hypergeometric distribution. 
Therefore
\begin{align*}
& \Expect \qth{ \exp \left(  \frac{\rho}{2} \iprod{ \sigma}{\tsigma}^2  \right) \mathrel{\bigg|} \sigma_S=z, \tsigma_S =\tz} \\
& =\expect{ \exp \left(  \frac{n\rho}{2} \left( \frac{ 4 H_1 + 4H_2 - n}{\sqrt{n} } \right)^2 \right) \mathrel{\bigg|} 
\sigma_1 \sigma_2=z, \tsigma_1 \tsigma_2 =\tz }  \\
& = \frac{1+o(1)}{\sqrt{1-\tau} },
\end{align*}
where the last equality holds due to $n \rho = \tau +o(1/n)$, $\tau<1$, 
and the convergence of the moment generating function. 


\end{appendices}

 \newcommand{\etalchar}[1]{$^{#1}$}


\end{document}